\documentclass[12pt,reqno]{amsart}
\usepackage{graphicx} 
\usepackage{amssymb}
\usepackage{amsthm}
\usepackage{enumitem}
\usepackage{xcolor}
\usepackage{stmaryrd}
\usepackage[margin = 1.15 in]{geometry}
\usepackage{todonotes}
\usepackage{stmaryrd}
\usepackage{subcaption}
\usepackage{caption}

\usepackage[
	colorlinks,
	linkcolor={black},
	citecolor={black},
	urlcolor={black}
]{hyperref}

\title{Dynamical Lifshitz Tails}
\author[\'I. Emilsd\'ottir]{\'Iris Emilsd\'ottir}
\thanks{\'I.\,E.\ and G.\,M.\ were supported in part by NSF grant DMS-2247966 (PI: A. Gorodetski).}
\address{Department of Mathematics, University of California, Irvine, CA 92697, USA}
\email{iris.e@uci.edu}

\author[G. Monakov]{Grigorii Monakov}
\address{Department of Mathematics, University of California, Irvine, CA 92697, USA}
\email{gmonakov@uci.edu}

\newcommand{\bbN}{{\mathbb{N}}}
\newcommand{\bbR}{{\mathbb{R}}}

\newcommand{\bbP}{{\mathbb{P}}}
\newcommand{\bbZ}{{\mathbb{Z}}}

\newcommand{\s}{{\mathrm{S}}}

\newcommand{\eps}{\varepsilon}

\newcommand{\Prob}{\mathbb{P}}

\newcommand{\bomega}{\bar{\omega}}

\DeclareMathOperator{\supp}{supp}

\newtheorem{theorem}{Theorem}[section]
\newtheorem{prop}[theorem]{Proposition}
\newtheorem{lemma}[theorem]{Lemma}
\theoremstyle{definition}
\newtheorem{remark}[theorem]{Remark}
\newtheorem{coro}[theorem]{Corollary}

\newtheorem*{claim*}{Claim}

\begin{document}

\begin{abstract}
We consider one-parameter families of random circle diffeomorphisms $g_{E,y}$ for which the unperturbed map $g_{0,\bar{0}}$ has a fixed point of order $2k$ and the dependence on the parameter $E$ is monotone. Under reasonable assumptions, we show that the rotation number $\rho(E)$ exhibits Lifshitz tail decay with exponent $-\frac{2k - 1}{2k}$, 
\[ \lim_{E \downarrow 0} \frac{\ln(-\ln(\rho(E) - \rho(0)))}{\ln(E)} = -\frac{2k-1}{2k}. \] The exponent is determined by the passage time through a parabolic bottleneck. A full rotation requires on the order of $E^{-\frac{2k - 1}{2k}}$ successive small perturbations, and the probability of such a streak decays exponentially as a function of its length. 
When $k=1$, the exponent is $-1/2$, and we recover as a corollary a purely dynamical proof of Lifshitz tail asymptotics at the spectral edges of the one-dimensional Anderson model.
\end{abstract}
\maketitle

\section{Introduction}
The dynamics of random circle maps and spectral properties of random Schr\"odinger operators share many structural similarities.  
This is not a coincidence: transfer matrices of Schr\"odinger operators act projectively on the circle, and the rotation number of the resulting cocycle is directly related to the integrated density of states (IDS), which measures the proportion of eigenvalues below a given energy. Recent work has exploited this connection to give dynamical proofs of regularity results for the IDS, including log-H\"older continuity and H\"older continuity \cite{GorKle2024, DuaGorKle2025}, both originally established by spectral methods \cite{CraSim1983, LeP1984, BourgainKlein}. These dynamical proofs do more than recover known theorems---they establish these results for the case of random circle diffeomorphisms, going far beyond the spectral setting, and illuminate the underlying mechanisms.

The present paper extends this approach to Lifshitz tails. Near spectral edges, the IDS of a random Schr\"odinger operator decays exponentially, a phenomenon predicted by Lifshitz \cite{Lifshitz1965} and established by various methods \cite{DonVar1975, Simon1985,KirMar1983}; see \cite{Kirsch2008,KirMet2007} for background and further developments. In the one-dimensional Anderson model, if \(E_-\) and \(E_+\) denote the lower and upper spectral edges, this takes the logarithmic form
\[
\lim_{E \downarrow E_-}
\frac{\ln(-\ln k(E))}{\ln(E-E_-)}
=
\lim_{E \uparrow E_+}
\frac{\ln(-\ln(1-k(E)))}{\ln(E_+-E)}
= -\frac12 .
\]
Equivalently, near the lower edge, \(k(E)\) is of order \(\exp(-c(E-E_-)^{-1/2})\), up to constants on the logarithmic scale. The standard heuristic is probabilistic: an eigenvalue close to the spectral edge can only exist if the random potential is unusually small over a large region, and such events are exponentially rare in the region's volume.
The exponent $-1/2$ enters because an eigenvalue at distance $\eps$ from the edge requires the potential to be small on a region of diameter $\sim \eps^{-1/2}$. 

We establish Lifshitz tails in a purely dynamical setting for random circle diffeomorphisms with no reference to Schr\"odinger operators. Consider a one-parameter family of random circle diffeomorphisms $g_{E,y}:\s^1 \rightarrow \s^1$, where $E\in\bbR$ is a deterministic parameter and $y\in\bbR^d$ is sampled i.i.d.\ from a compactly supported measure $\mu$. Let $\rho(E)$ denote the corresponding rotation number. We assume monotonicity in $E$, so $\rho(E)$ is nondecreasing. In many monotone families, $\rho(E)$ can have intervals of constancy (plateaus, corresponding to mode-locking; see \cite{Gourmelon}) alternating with intervals of growth. We focus on the growth right after such a plateau. We assume that at $E=0$, $g_{0,\bar{0}}$ has a parabolic fixed point of order $2k$, that is, the graph of the lift is tangent to the diagonal and curves away with order $2k$ (see Fig.~\ref{pic1} for its lift $G_{0,\bar{0}}$). Under the full assumptions of Section~\ref{sec:MainResults} (including the assumptions on the random parameter $y$ and condition~\ref{fixedPoint}), $E=0$ is the right endpoint of a plateau. Our main result, Theorem~\ref{thm:main} below, describes the rate at which $\rho$ lifts off for small $E>0$. Namely, the increment $\rho(E)-\rho(0)$ has Lifshitz tail asymptotics of order $\exp\left(-c E^{-\frac{2k - 1}{2k}}\right)$ as $E\downarrow 0$.

\begin{figure}[h]
    \centering
    \includegraphics[width=0.5\linewidth]{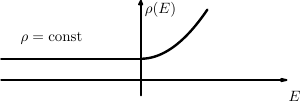}
    \caption{$\rho(E)$}
    \label{fig:plateau}
\end{figure}

The behavior of the rotation number is governed by passage time through the parabolic bottleneck. Orbits slow down dramatically when passing through a neighborhood of the fixed point, and it requires on the order of $E^{-\frac{2k - 1}{2k}}$ iterates to get through said neighborhood. For an orbit to pass through the bottleneck and contribute to the rotation number, the random perturbations must remain small throughout this passage. The probability of such a streak decays exponentially in its length, yielding exponential decay with exponent $-\frac{2k - 1}{2k}$.
As our main result, we prove that under natural assumptions (see Section~\ref{sec:MainResults}) the rotation number $\rho(E)$ satisfies:
\[
\lim_{E \downarrow 0} \frac{\ln(-\ln(\rho(E) - \rho(0)))}{\ln(E)} = -\frac{2k-1}{2k}.
\]

As a corollary, we recover Lifshitz tails for the one-dimensional Anderson model, which describes a quantum particle on $\bbZ$ subject to an i.i.d.\ random potential (see Section~\ref{sec:Anderson} for precise definitions). The transfer matrices associated with the eigenvalue equation act projectively on the circle, and at the spectral edge, this action has a parabolic fixed point of order $2$, placing us exactly in the setting of our main result, Theorem~\ref{thm:main}. This gives a new proof of a classical result, but more importantly, it identifies the source of the exponent: the square-root passage time through a parabolic bottleneck.  

We expect our methods to generalize. Within the Anderson model itself, Lifshitz tails occur not only at spectral edges but also at the boundaries of internal gaps \cite{int01, Simon1987, int03, int04}.
Beyond the Anderson model, natural candidates include models with an ergodic background potential and random noise \cite{AviDamGor2023,DamFilGoh2022,DamGor2022}. The recent construction of an analogue of the rotation number for Schr\"odinger operators on a strip \cite{LiWu} suggests that our methods may apply in that setting as well. 

\medskip
The paper is organized as follows. Section~\ref{sec:MainResults} introduces the setting and main theorem. Section~\ref{sec:Bottleneck} develops bottleneck estimates that control the passage time near the parabolic point. Section~\ref{sec:MainProof} proves the main theorem, and Section~\ref{sec:Anderson} derives Lifshitz tails for the one-dimensional Anderson model.

\section{Setting and Main Results}\label{sec:MainResults}

\subsection{Dynamical Setting}

We consider a one-parameter family of random circle diffeomorphisms $g_{E,y}:\s^1 \to \s^1,$ where $E\in \bbR$ is a deterministic parameter and $y \in \bbR^d$ is a random parameter sampled independently at each iteration; see \ref{compSupp} -- \ref{fixedPoint}. Identify the circle with $\s^1 = \bbR / \bbZ$, with the standard projection $\pi:\bbR\to \s^1$. We assume that the unperturbed map $g_{0, \bar{0}}$ has a unique fixed point, which we place at $\pi(0)$. Furthermore, we pick a lift $G_{E, y}: \bbR \to \bbR$ of the family $g_{E,y}$ such that $\pi \circ G_{E, y} = g_{E, y} \circ \pi$ and $G_{0,\bar{0}}(0) = 0$. The assumptions below isolate the properties of this lifted family that will be used in the proof. As we show in Section \ref{sec:Anderson}, Schr\"odinger cocycles satisfy these assumptions. A simple model example, for $d=1$ and $k=1$, is given by 
\begin{equation*}
    G_{E, y}(x) = x + \frac{\sin^2(\pi x)}{5} + E - y,
\end{equation*}
see Figures~\ref{pic1} and~\ref{pic2} below.

We impose the following conditions on the lift:
\begin{enumerate} [label={\bf(G\arabic*)}]
    \item Regularity: $G_{E, y} (x)$ is $C^{1}$ in $E,$ $y,$ and $x;$ \label{analyt}
    \item Order $2k$ tangency: there exist $c_1, c_2 > 0$ and $\delta_0 > 0$ such that for $x \in [-\delta_0, \delta_0]$ we have
    \begin{equation*}
        x + c_1 x^{2k} \le G_{0, \bar{0}} (x) \le x + c_2 x^{2k}.
    \end{equation*} \label{derder:x}
    \item Monotonicity in $E$:
    \begin{equation*}
        \left. \frac{\partial G_{E, y} (x)}{\partial E} \right|_{E = 0, y = \bar{0}, x = 0} > 0,  
    \end{equation*}
    and
    \begin{equation*}
        \frac{\partial G_{E, y} (x)}{\partial E} \ge 0 \quad \text{for every $y \in \bbR_{+}^d$, $E \in \bbR$, and $x \in \bbR$;}
    \end{equation*}\label{der:E}
    \item Monotonicity in $y$: for every $j \in \{1, 2, \ldots, d\}$
    \begin{equation*}
        \left. \frac{\partial G_{E, y} (x)}{\partial y_j} \right|_{E = 0, y = \bar{0}, x = 0} < 0,
    \end{equation*}
    and
    \begin{equation*}
        \frac{\partial G_{E, y} (x)}{\partial y_j} \le 0 \quad \text{for every $y \in \bbR_{+}^d$, $E \in \bbR$, and $x \in \bbR$.}
    \end{equation*}\label{der:y}
\end{enumerate}
In other words, the graph of $G_{0,0}$ is tangent to the diagonal at the origin and curves away parabolically of order $2k$; increasing $E$ shifts orbits forward, while increasing any component of $y$ shifts them backward.

\begin{figure}[htbp] 
\centering 
\begin{minipage}{0.49\textwidth} 
    \centering
    \includegraphics[width=0.75\linewidth]{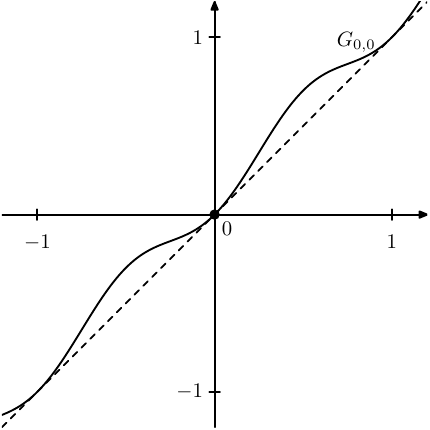} 
    \captionof{figure}{$G_{0,\bar{0}}(x)$} 
    \label{pic1}
\end{minipage} 
\hfill 
\begin{minipage}{0.49\textwidth} 
    \centering 
    \includegraphics[width=0.75\linewidth]{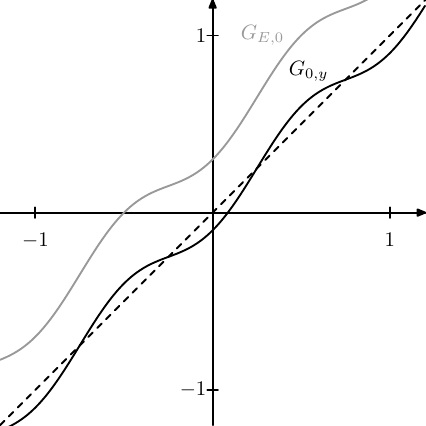}
    {\captionsetup{width=\textwidth}
    \captionof{figure}{$G_{E>0,\bar{0}}(x)\text{ and }G_{0,y>0}(x)$}
    \label{pic2}}
\end{minipage}
\end{figure}
Let $\mu$ be a probability measure on $\bbR^d$, and let $\bomega=(\omega_1, \omega_2, \ldots)$ be a sequence of i.i.d.\ random variables with distribution $\mu$. We write 
\begin{align*}
    g^N_{E,\bar{\omega}}
    &:= g_{E,\omega_N} \circ \cdots \circ g_{E,\omega_1}; \\
    G^N_{E,\bar{\omega}}
    &:= G_{E,\omega_N} \circ \cdots \circ G_{E,\omega_1},
\end{align*}
and let $\sigma$ denote the left shift on 
$(\bbR^d)^{\bbN}$, given by $(\sigma(\bomega))_n = \omega_{n + 1}.$ We write $\bbP=\mu^\bbN$ for the probability on the sequence space. We impose the following conditions on $\mu$:
\begin{enumerate}[label={\bf(M\arabic*)}]
    \item $\supp(\mu)$ is a compact subset of $[0,\infty)^d$ containing at least two points;\label{compSupp}
    \item there exist $C, l > 0$ such that for every $\eps > 0$,
    \begin{equation*}
        \mu \left( [0, \eps]^d \right) > C \eps^l;
    \end{equation*} \label{ineq:dens}    
    \item there exists a point $x_{*} \in \bbR$ such that for every $y \in \supp(\mu)$ we have \[G_{0, y} (x_{*}) \, >\, x_{*}.\] \label{fixedPoint}
\end{enumerate}

For the discussion of the meaning and necessity of assumptions \ref{compSupp}--\ref{fixedPoint}, see Remark~\ref{rem:assumptions}.

The rotation number measures the average rate at which orbits wind around the circle. The following is standard; see, e.g., \cite[Section~5]{Herman1983}, \cite{LiLu2008}, \cite{Ruelle1985}, or \cite[Appendix~A]{GorKle2021}.
\begin{prop} \label{prop:rotation}
    For every $E$ there exists a constant $\rho(E)$, called the rotation number, such that for $\bbP$-a.e. $\bomega$ and every $x \in \bbR$ one has
    \begin{equation*}
        \rho (E)= \lim_{n \to \infty} \frac{G^n_{E, \bomega} (x) - x}{n}.
    \end{equation*}
\end{prop}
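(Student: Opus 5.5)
The plan is to prove existence of the limit through Kingman's subadditive ergodic theorem, using only monotonicity and the lift structure. Fix $E$ and write $G^n = G^n_{E,\bomega}$. Each $G_{E,y}$ is a lift of an orientation-preserving circle diffeomorphism. It therefore commutes with integer translations and is increasing, which gives $|(G(x)-x)-(G(x')-x')|<1$ for all $x,x'$. Since compositions keep both properties, the displacement $G^n(x)-x$ varies by less than $1$ in $x$. So it suffices to treat $x=0$, and the limit, once it exists, is the same for every $x$.

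Next define
\[
a_n(\bomega)=\sup_{x}\bigl(G^n_{E,\bomega}(x)-x\bigr)=\max_{x\in[0,1]}\bigl(G^n_{E,\bomega}(x)-x\bigr),
\qquad
b_n(\bomega)=\inf_x\bigl(G^n_{E,\bomega}(x)-x\bigr).
\]
The cocycle identity $G^{n+m}_{E,\bomega}=G^m_{E,\sigma^n\bomega}\circ G^n_{E,\bomega}$ shows that $a$ is subadditive, $a_{n+m}(\bomega)\le a_n(\bomega)+a_m(\sigma^n\bomega)$, and that $b$ is superadditive. Both functions are measurable, because $G$ is continuous in $y$ by \ref{analyt}. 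They are integrable, because \ref{compSupp} makes the displacement of $G_{E,y}$ bounded uniformly for $y$ in the compact support. The shift $\sigma$ is ergodic for $\bbP=\mu^{\bbN}$. Kingman's theorem then gives a.s. limits $a_n/n\to\alpha$ and $b_n/n\to\beta$, with $\alpha$ and $\beta$ constants. Since $0\le a_n-b_n\le 1$, we get $\alpha=\beta=:\rho(E)$. Finally, $b_n\le G^n(x)-x\le a_n$ for every $x$, so the limit holds for every $x$ simultaneously on a single full-measure set.

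The only delicate points are bookkeeping rather than real obstacles:
\begin{itemize}
\item the bound $a_n-b_n\le 1$, which follows from monotonicity and $\bbZ$-equivariance;
\item measurability of the supremum, which follows by taking it over rational $x$;
\item making sure the exceptional null set does not depend on $x$, which the sandwich $b_n\le G^n(x)-x\le a_n$ handles automatically.
\end{itemize}
Since the statement is standard, one could also simply cite \cite[Appendix~A]{GorKle2021}.
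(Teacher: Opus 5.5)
Your argument is correct and complete. The paper gives no proof of this proposition: it calls it standard and refers to \cite{Herman1983}, \cite{LiLu2008}, \cite{Ruelle1985} and \cite[Appendix~A]{GorKle2021}. Your Kingman argument therefore supplies a self-contained proof where the paper only cites.

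You use one fact without comment: every $G_{E,y}$ is increasing with $G_{E,y}(x+1)=G_{E,y}(x)+1$. This is not listed among the hypotheses, but it follows from them. The degree $G_{E,y}(x+1)-G_{E,y}(x)\in\{\pm 1\}$ depends continuously on $(E,y)$ by \ref{analyt}, so it is constant on the connected parameter space. It equals $+1$ at $(0,\bar 0)$, because \ref{derder:x} forces $G_{0,\bar 0}'(0)=1>0$.

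The other standard route uses a stationary measure $\nu$ on $\s^1$. One applies Birkhoff's theorem to the displacement $G_{E,\omega_1}(x)-x$ under the skew product $(\bomega,x)\mapsto(\sigma\bomega,g_{E,\omega_1}(x))$, which preserves $\bbP\times\nu$. Then the bounded oscillation of $G^n_{E,\bomega}(x)-x$ in $x$, together with ergodicity of $\sigma$, shows the limit is a constant independent of $x$ and of the ergodic component.

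Your version needs neither a stationary measure nor that clean-up step. The sandwich $b_n\le G^n_{E,\bomega}(x)-x\le a_n$ handles all $x$ on a single null set at once. Since only ergodicity of the base is used, it carries over verbatim to the skew products over a general ergodic base raised in the paper's first remark.

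What the stationary-measure route buys is the formula $\rho(E)=\iint\bigl(G_{E,y}(x)-x\bigr)\,d\mu(y)\,d\nu(x)$. The present paper never needs it: Corollaries~\ref{c:lower} and~\ref{c:upper} only use the existence of the almost-sure limit at a single point, which is exactly what you establish.
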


Our assumptions, in particular \ref{fixedPoint}, guarantee that $E = 0$ is a right endpoint of a plateau for $\rho(E)$. Namely, for sufficiently small $E > 0,$ we have $\rho(-E) = \rho(0).$ 

\subsection{Main Result for the Dynamics on the Circle}

Our main result establishes Lifshitz tails for the rotation number in a neighborhood of $E = 0$:
\begin{theorem} \label{thm:main}
Let $g_{E, y}$ and $\mu$ satisfy assumptions \ref{analyt} -- \ref{der:y} and \ref{compSupp} -- \ref{fixedPoint}. Then
      \begin{equation*}
        \lim_{E \downarrow 0} \frac{\ln(-\ln(\rho(E)-\rho(0)))}{\ln(E)} = -\frac{2k-1}{2k}.
    \end{equation*}
\end{theorem}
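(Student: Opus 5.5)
We will prove two matching bounds: for every $\eps>0$ and all sufficiently small $E>0$,
\[
\exp\!\left(-E^{-\frac{2k-1}{2k}-\eps}\right)\le \rho(E)-\rho(0)\le \exp\!\left(-E^{-\frac{2k-1}{2k}+\eps}\right).
\]
Taking $\ln(-\ln(\cdot))/\ln E$ then gives the limit. Throughout, $\rho(0)$ is identified via \ref{fixedPoint}. The point $x_*$ is a barrier, since $G_{0,y}(x_*)>x_*$ for all $y\in\supp(\mu)$, and orbits cannot move backward past $x_*$ beyond a bounded amount. Combined with the fact that $0$ is a fixed point of $G_{0,\bar 0}$ and $G_{0,y}\le G_{0,\bar 0}$ by \ref{der:y}, this gives $\rho(0)=0$ for our choice of lift: orbits starting just left of $0$ never cross $0$ at $E=0$. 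Consequently $\rho(E)$ equals the asymptotic frequency of full turns, i.e.\ of passages of the orbit through the bottleneck $[-\delta,\delta]$ around $0$.

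The key deterministic input is a bottleneck estimate. Near $0$, by \ref{analyt}--\ref{der:y}, we have
\[
G_{E,y}(x)\approx x+c x^{2k}+aE-\textstyle\sum_j b_j y_j .
\]
First, if all $y_j\le \eta E$ with $\eta$ small, each step advances by at least $x+c_1x^{2k}+a'E$. Comparing with the ODE $\dot x=c_1x^{2k}+a'E$ shows that the orbit crosses from $-\delta$ to $\delta$ in at most $N(E)\asymp E^{-\frac{2k-1}{2k}}$ steps. Indeed, with the substitution $x=E^{1/2k}u$ the crossing time is $E^{\frac1{2k}-1}\int du/(1+u^{2k})$. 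Second, if at least a fraction of the steps have $|y|\ge \eta' E$ with $\eta'$ large, then on the window $|x|\le K E^{1/2k}$ the backward push dominates. Any orbit crossing this window must therefore have spent at least $\asymp E^{\frac1{2k}-1}$ consecutive steps there with almost all $y$ of size $O(E)$. This uses \ref{compSupp}: $\mu$ is not a point mass, so $\mu(\{|y|\le \eta' E\})\le q<1$ for small $E$. Such a streak of length $M$ has probability at most $q^{M}$, or more precisely a large-deviation bound $e^{-cM}$.

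The lower bound on $\rho(E)$ comes from a renewal argument. Outside $[-\delta,\delta]$ the maps move points forward uniformly by \ref{derder:x} and compactness, so an orbit returns to $-\delta$ within a bounded number of steps. Each time it is near $-\delta$, the probability that the next $N(E)$ parameters all lie in $[0,\eta E]^d$ is at least $(C(\eta E)^l)^{N(E)}$ by \ref{ineq:dens}. This equals $\exp(-N(E)\, l\ln(1/E)(1+o(1)))$, which is $\ge \exp(-E^{-\frac{2k-1}{2k}-\eps})$; the $\ln(1/E)$ factor is absorbed by $\eps$. An ergodic (Borel--Cantelli / law of large numbers over disjoint blocks) argument then yields $\rho(E)\ge \exp(-E^{-\frac{2k-1}{2k}-\eps})$.

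For the upper bound, we count crossings of the window in $n$ steps. Each crossing requires a streak, of length $\gtrsim E^{\frac1{2k}-1}$, of steps in which almost every $y$ is small. The expected number of such streak starting times in $n$ steps is at most $n\,e^{-cE^{-\frac{2k-1}{2k}}}$. Hence $\rho(E)\le \exp(-cE^{-\frac{2k-1}{2k}})$ by Proposition~\ref{prop:rotation} and the ergodic theorem.

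The main obstacle is the upper bound's claim that crossing forces a long streak of small noise. An orbit could wait near the fixed point, drift back and forth, and accumulate progress from sporadic lucky steps. To handle this, we will use monotonicity in $x$ of the maps and compare with a single well-chosen point: with a positive density of bad steps, the point $-KE^{1/2k}$ is pushed back by a definite amount, so an orbit can only gain ground during stretches of good steps. We will also use the barrier $x_*$ to bound backward excursions.
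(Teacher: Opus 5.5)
Your lower bound is the paper's: blocks of length $N\asymp E^{-\frac{2k-1}{2k}}$ with every letter in $[0,\eta E]^d$ force a full turn, and such a block has probability at least $(C(\eta E)^l)^N$.

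One claim there is false as stated. Outside $[-\delta,\delta]$, only letters near $\bar 0$ (and small $E$) move points forward uniformly. A non-small letter may have fixed points there; in the model example $x+\frac{\sin^2(\pi x)}{5}+E-y$, a letter $y$ of fixed size has fixed points at a fixed distance from $0$. So orbits need not return to $-\delta$ in bounded time. Drop the renewal at $-\delta$ and use what you also mention, disjoint blocks plus monotonicity: every block maps $x_*$ above $x_*$ (the barrier), a good block maps it above $x_*+1$, and the law of large numbers counts the good blocks.

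Your upper bound takes a genuinely different route, and it works. The paper uses one macroscopic kick. It fixes $\hat y\in\supp(\mu)\setminus\{\bar 0\}$ and a neighborhood $U$ with $G_{E,y}(\eta)<-\eta$ for $y\in U$. Lemma~\ref{l:short-time} then shows that within $N'\asymp E^{-\frac{2k-1}{2k}}$ steps no point $\le 0$ passes $\eta$, and no point $\le-\eta$ returns past $0$. Hence a block of length $N'$ containing a single letter of $U$ makes no full turn, and $(1-p_1)^{N'}/N'$ follows from independence alone.

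You instead stay at the collar scale. Letters with $\|y\|\ge\eta'E$ push back by $\gtrsim E$ on $|x|\le w:=KE^{1/(2k)}$, and a Chernoff bound controls how often they are scarce. The back-and-forth obstacle you flag is best handled by a last-passage decomposition rather than by comparing with one point. For each integer $j$, take the final excursion through $j+[-w,w]$ before the orbit first exceeds $j+w$. Inside it:
\begin{itemize}
\item every step is at most $+AE$ with $A=c_2K^{2k}+\Gamma_E$, for any letter, by monotonicity in $y$; monotonicity in $x$ also bounds the entry step by $G_{E,\bar 0}(-w)\le -w+AE$;
\item a letter with $\|y\|\ge\eta'E$ moves the point by at most $-BE$, where $B=\gamma_y\eta'-A$. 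To see this, compare $y$ coordinatewise with $\eta'E\,e_i$, since Lemma~\ref{lm:GBounds} only covers $y\in[0,\delta]^d$.
\end{itemize}
So the excursion has length $\gtrsim E^{-\frac{2k-1}{2k}}$, and its fraction of such letters is below $A/(A+B)$. Taking $\eta'$ large makes this smaller than $\frac12(1-\mu(\{\bar 0\}))$, while for small $E$ these letters have probability close to $1-\mu(\{\bar 0\})$. Excursions for different $j$ are disjoint in time, so summing the Chernoff bound over lengths and end times gives $\rho(E)\lesssim e^{-cE^{-\frac{2k-1}{2k}}}$.

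The paper's route is shorter and needs no large deviations. Yours costs this bookkeeping, but it makes the heuristic ``crossing needs a long streak of small noise'' literal. It also shows that perturbations of size only $O(E)$ near the fixed point already block passage.
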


\begin{remark}
    Theorem \ref{thm:main} suggests several directions for further study: 
    \begin{itemize}

        \item If we replace i.i.d.\ diffeomorphisms with an arbitrary skew product over an ergodic base and a circle as the fiber, the rotation number is still well defined. Under what conditions would we observe Lifshitz tails in this more general setting?

        \item Generalizations of the rotation number arise in several settings. A continuous-time analogue of it is used in the study of linear Hamiltonian systems (see \cite{Johnson} and references therein). Another analogue of it has recently been found for symplectic cocycles (see \cite{LiWu}) and used to answer some questions in spectral theory of discrete Schr\"odinger operators on a strip. A natural question is whether Lifshitz tails occur in these settings as well.
    \end{itemize}
\end{remark}

\begin{remark}\label{rem:assumptions}
    The intuition behind assumptions \ref{compSupp} -- \ref{fixedPoint} is the following:
    \begin{itemize}
        
        \item In \ref{compSupp}, the most important part is that the support of $\mu$ has to be nontrivial. Otherwise, the dynamics stops being random, and the rotation number for a deterministic family of diffeomorphisms is known to behave differently. For example, for the Schr\"odinger cocycle with zero potential, the rotation number can be computed explicitly, and it does not exhibit Lifshitz tails. The remaining parts of \ref{compSupp} are mostly technical. In particular, one can replace the compactness assumption for $\supp(\mu)$ with uniform estimates for the derivatives of $G_{E, \bar{y}}$.

        \item Assumption \ref{ineq:dens} is used to establish the lower bound. If one were to make the density of $\mu$ at zero subpolynomial, for example $\mu([0, \eps]) \approx e^{-1/\eps}$, the last computation in Section~\ref{sec:lower} would only guarantee that
        \begin{equation*}
            \lim_{E\downarrow 0} \frac{\ln(-\ln(\rho(E)-\rho(0)))}{\ln(E)} \geq -\frac{4k-1}{2k},
        \end{equation*}
        which does not determine the exact decay rate.

        \item Finally, \ref{fixedPoint} guarantees that the point $E = 0$ is indeed a right end of a plateau for the rotation number $\rho(E)$. Indeed, if there were a parameter value $\bar{y} \in \supp(\mu)$ such that for any $x$ one had $G_{0, \bar{y}} (x) < x,$ then by compactness in a finite number of steps $n$ with some positive probability the trajectory would make a full rotation in the negative direction ($G^n_{0, \bomega} (x) < x - 1$ for some positive measure set of $\bomega$). In that case, for any small positive value $E > 0$, an argument similar to the one in Section~\ref{sec:lower} would show that $\rho(-E) < \rho(0).$ Thus $E = 0$ cannot be a right endpoint of a plateau.
    \end{itemize}
\end{remark}

\subsection{Application to the Anderson Model}

As an application of Theorem~\ref{thm:main}, we derive Lifshitz tails for the one-dimensional Anderson model. Consider the random Schr\"odinger operator on $\ell^2(\bbZ)$ given by 
\[
H_\omega \psi (n) = \psi(n+1) + \psi(n-1) + V_\omega(n)\psi(n),
\]
where $\{V_\omega (n)\}_{n \in \bbZ}$ is an i.i.d.\ family of random variables with common distribution $\mu$, a  probability measure whose support is $[a,b]$ with $a\neq b.$ The almost-sure spectrum is $\Sigma=[a-2,b+2]$. We denote the integrated density of states by $k(E)$; informally, $k(E)$ measures the proportion of eigenvalues below energy $E$. The associated transfer matrices 
\[
A_{E,\omega} = \begin{bmatrix} E - V_\omega(n) & -1 \\ 1 & 0 \end{bmatrix} \in \mathrm{SL}(2,\bbR)
\]
act projectively on $\mathbb{RP}^1\simeq \s^1$ as random circle diffeomorphisms. At the spectral edges, this projective action has a quadratic parabolic fixed point. We use the projective normalization of the rotation number, for which the IDS satisfies $k(E)=1 - \rho(E).$ This is the usual relation between the integrated density of states and the rotation number after passing to $\mathbb{RP}^1$, where opposite directions are identified; compare the double-cover normalization in \cite{DelyonSouillard1983}. Applying Theorem~\ref{thm:main} yields the
following. 

\begin{theorem}\label{thm:AndersonLifshitz}
If $a\neq b$ and there exist $C,l$ such that $\mu([a,a+\eps))\geq C\eps^l$ and $\mu((b-\eps,b]) \geq C\eps^l$ for sufficiently small $\eps$, then the integrated density of states $k$ satisfies
    \begin{equation*}
        \lim_{E \downarrow E_-} \frac{\ln\bigl(-\ln k(E)\bigr)}{\ln(E - E_-)} =
        \lim_{E \uparrow E_+} \frac{\ln\bigl(-\ln(1 - k(E))\bigr)}{\ln(E_+ - E)} = -\frac{1}{2},
    \end{equation*}
    where $E_-=a-2$ and $E_+=b+2$ are the spectral edges. 
\end{theorem}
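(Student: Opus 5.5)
The plan is to deduce Theorem~\ref{thm:AndersonLifshitz} from Theorem~\ref{thm:main}, with $k=1$. We put the projective action of $A_{E,\omega}$ into the normalized form required by \ref{analyt}--\ref{der:y} and \ref{compSupp}--\ref{fixedPoint}, and then use $k(E)=1-\rho(E)$.

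\textbf{Lower edge.} Set $E = E_- + \eps = a-2+\eps$ and $y = V_\omega(n) - a \ge 0$. Then $E - V = -2 + \eps - y$, and the transfer matrix is $\begin{bmatrix} -2+\eps-y & -1\\ 1 & 0\end{bmatrix}$.

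At $\eps=y=0$ this matrix equals $-\begin{bmatrix} 2 & 1\\ -1 & 0\end{bmatrix}$, which is parabolic with trace $-2$. Its projective action therefore has a unique fixed direction, $v=(1,-1)$. The overall sign $-1$ acts trivially on $\mathbb{RP}^1$.

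Parametrize $\mathbb{RP}^1$ by an angle $\theta\in\bbR/\pi\bbZ$, and rescale to $\bbR/\bbZ$ with the fixed point moved to $0$. Pick the lift $G_{0,0}$ fixing $0$. A parabolic Möbius map in angle coordinates satisfies $G(x)=x+cx^2+O(x^3)$ with $c\neq0$, and by reversing orientation if necessary we can take $c>0$. This gives \ref{derder:x} with $k=1$.

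For \ref{der:E} and \ref{der:y}, only the entry $t=E-V$ depends on the parameters. The sign of $\partial_t$ of the projective angle is constant: $\partial_t(\text{angle of }A u)$ has a fixed sign for all $u\ne0$, because increasing $t$ adds a multiple of $u_1 e_1$ and the cross product with $Au$ equals $u_1^2\ge0$, up to sign. This sign vanishes only at $u_1=0$, which is a single direction, and it is nonzero at the fixed direction $v$. So after choosing orientation, $\partial_E G\ge0$ and $\partial_y G=-\partial_E G\le0$ everywhere, with strict inequality at $0$.

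Assumption \ref{compSupp} holds with $\supp = [0,b-a]$ and $a\ne b$. Assumption \ref{ineq:dens} is exactly the hypothesis on $\mu([a,a+\eps))$. For \ref{fixedPoint}, take $y>0$ and $\eps=0$: the matrix has $|\mathrm{trace}|>2$ and is hyperbolic, since it corresponds to energy below the spectrum. In fact the whole family with $\eps=0$ and $y\in[0,b-a]$ leaves invariant a common cone; this is the standard fact that energies below $\inf\Sigma$ admit positive solutions. So there should be a point $x_*$ that every $G_{0,y}$ moves forward. Concretely, I would choose $x_*$ slightly on the "positive" side of the parabolic point, where $G_{0,0}(x_*)>x_*$, and check that all $G_{0,y}$ also push it forward. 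This uses the cone invariance: the image of the cone boundary lies strictly inside.

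Theorem~\ref{thm:main} then gives $-\ln(\rho(E)-\rho(E_-))\approx \eps^{-1/2}$. Since $k(E_-)=0$, we need to identify $\rho(E)-\rho(E_-)$ with $k(E)$ up to sign and orientation. This follows from $k = 1-\rho$ and the orientation choice: if the chosen lift reverses orientation, we apply the theorem to the conjugated family, and the increments match. This yields the first limit.

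\textbf{Upper edge.} Use the symmetry $H_\omega\mapsto -H_\omega$ via $\psi(n)\mapsto(-1)^n\psi(n)$. This sends $V$ to $-V$, with distribution supported on $[-b,-a]$, and $k(E)$ to $1-k(-E)$. The hypothesis $\mu((b-\eps,b])\ge C\eps^l$ becomes the lower-edge hypothesis for the reflected measure, so the second limit follows from the first.

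\textbf{Main obstacle.} The main difficulty is verifying \ref{fixedPoint} and the global sign conditions. This needs an explicit choice of $x_*$, for example the direction $e_2$ or a direction inside the positive cone, together with a computation showing that each hyperbolic $A_{-2-y}$ rotates it strictly in the positive direction under the chosen orientation. A secondary task is tracking the normalization constants, since the lift and orientation are fixed only up to sign.
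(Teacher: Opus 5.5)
Your reduction follows the same route as the paper: verify the hypotheses of Theorem~\ref{thm:main} with $k=1$ for the edge-shifted projective family, then use $k=1-\rho$. Your cross-product computation for the global signs in \ref{der:E}--\ref{der:y} is correct. The gauge map $\psi(n)\mapsto(-1)^n\psi(n)$ is a clean way to get the upper edge from the lower one.

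The genuine gap is \ref{fixedPoint}. You leave it open, and the concrete choice you suggest cannot work. Write $[\xi:\eta]$ in the coordinate $t=\xi/\eta+1$, oriented by increasing $t$. Then the parabolic point is $t=0$, and $A_{\eps,y}$ acts by $F_{\eps,y}(t)=-1+\eps-y-\frac{1}{t-1}$. For $y>0$, $F_{0,y}$ is hyperbolic with fixed points $t_\pm(y)=\frac{-y\pm\sqrt{y^2+4y}}{2}$ on either side of $0$ (about $\pm\sqrt{y}$ for small $y$). Since $F_{0,y}(0)=-y<0$, every point between them is moved backward. So a point \emph{slightly on the positive side} of the parabolic point is pushed backward by every letter $y$ that is not too small. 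This is exactly the bad-letter mechanism $G_{0,\hat y}(\eta)<-2\eta$ that the upper bound relies on, so no computation can remove it. The cone picture does not help as stated either. The common invariant cone $\{t\le 0\}$ has the parabolic point on its boundary, and $A_{0,0}$ fixes that point. Interior directions near it fail for the same reason. In fact, a point satisfies \ref{fixedPoint} exactly when its $t$-coordinate lies outside the arc $[t_-(b-a),t_+(b-a)]$.

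The missing idea is to use a direction whose image does not depend on the parameters. Since $A_{\eps,y}e_2=-e_1$ for all $(\eps,y)$, let $x_*$ be a lift of $[0:1]$, which is $t=1$. With the lifts chosen continuously in $(\eps,y)$, the map $(\eps,y)\mapsto G_{\eps,y}(x_*)$ is continuous and takes values in the discrete set of lifts of $[1:0]$, so it is constant. Hence $G_{0,y}(x_*)=G_{0,0}(x_*)>x_*$. The inequality holds because $g_{0,0}$ has a unique fixed point and moves every other point forward. More generally, any direction fixed by no $A_{0,y}$ with $y\in[0,b-a]$ works, by connectedness of $[0,b-a]$.

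There is also a smaller gap. You choose the orientation twice: once to make the quadratic coefficient in \ref{derder:x} positive, and once to get $\partial_E G\ge 0$. These choices are not independent, because the product of the two signs does not depend on orientation, so you must check that it is positive. It is: in the coordinate $t$, both $F_{0,0}''(0)=2$ and $\partial_\eps F=1$ are positive. Alternatively, a mismatch would give $A_{\eps,0}$ two fixed directions for small $\eps>0$, contradicting $|\mathrm{tr}\,A_{\eps,0}|=|\eps-2|<2$. This sign check is what makes $\eps>0$, and not $\eps<0$, the side on which the bottleneck opens.
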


\section{Non-Stationary Bottleneck Estimates}\label{sec:Bottleneck}
This section establishes upper and lower bounds on the number of iterates required to pass through a neighborhood of a fixed point of order $2k$. These estimates are the key input for Section~\ref{sec:MainProof}, where they determine how many consecutive small perturbations are needed for a full rotation.
To motivate the exponent, consider the following heuristic.
Near the fixed point, the map behaves like $x \mapsto x + \lambda x^{2k} + \eps$. The critical region is the \emph{collar} $|x| \le \eps^{\frac{1}{2k}}$. Inside the collar, the orbit advances on the order of $\eps$ per step and must traverse a distance on the order of $\eps^{\frac{1}{2k}}$, so the crossing time is on the order of $\eps^{\frac{1}{2k}}/\eps = \eps^{-\frac{2k - 1}{2k}}$. Sufficiently near the fixed point, but outside the collar, the term $\lambda x^{2k}$ dominates,  contributing transit time of the same order. The following two lemmas make this precise.
For a sequence of maps $f_j : \bbR \to \bbR$ we write $f^n = f_n \circ f_{n-1} \circ \cdots \circ f_1$.
 
\begin{figure}[htbp]
\centering
\includegraphics[width=0.5\textwidth]{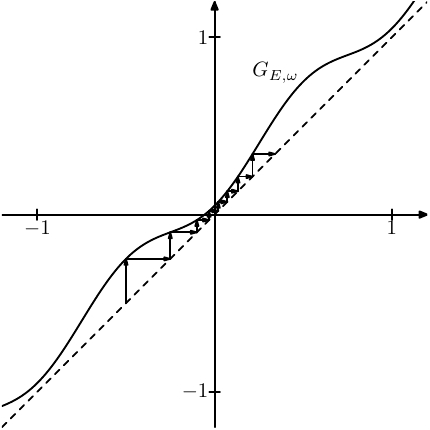}
\caption{A trajectory passing through the bottleneck near the fixed point. The orbit decelerates as it approaches the collar $|x| \le \eps^{\frac{1}{2k}}$ (Phase~1), drifts slowly through (Phase~2), and accelerates away (Phase~3).}
\label{fig:trajectory}
\end{figure}
 
\begin{lemma}\label{lemma:bottleneckLB}
    Let $f_j$ be a sequence of monotone increasing functions on $[-\delta, \delta]$ for some $\delta > 0$. If for some $\lambda > 0$, any $j \in \bbN$, and any $x \in [-\delta, \delta]$ we have
    \begin{equation*}
        f_j(x) > x + \lambda x^{2k} + \eps,
    \end{equation*}
    then there exist a constant $C_1$ (that does not depend on $\eps$) and $\eps_0 > 0$ such that for all $0 < \eps < \eps_0$ and $N_1 = \lceil C_1 \eps^{-\frac{2k - 1}{2k}} \rceil$ we have
    \begin{equation*} \label{stepsLB}
        f^{N_1}(-\delta) \ge \delta.
    \end{equation*}
\end{lemma}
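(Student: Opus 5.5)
The plan is to compare the non-autonomous orbit with a single autonomous model and then bound the passage time of that model by splitting $[-\delta,\delta]$ into three regions, as in the heuristic above. I assume the orbit $x_n = f^n(-\delta)$ is meant to be stopped once it leaves $[-\delta,\delta]$; the first time it exceeds $\delta$ is all the statement needs.

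Set $h(x) = x + \lambda x^{2k} + \eps$. While $x_n \in [-\delta,\delta]$, the hypothesis gives $x_{n+1} > h(x_n)$. Since $\lambda x^{2k} + \eps > 0$, each step increases $x_n$ by more than $\eps$. It therefore suffices to show that the number of steps the orbit can spend in $[-\delta,\delta]$ is at most $C_1 \eps^{-\frac{2k-1}{2k}}$.

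Count these steps in three regions.

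\emph{Collar.} On $|x| \le \eps^{\frac{1}{2k}}$, every step advances by at least $\eps$. The collar has length $2\eps^{\frac{1}{2k}}$, so the orbit spends at most $2\eps^{\frac{1}{2k}}/\eps + 1 = 2\eps^{-\frac{2k-1}{2k}} + 1$ steps there.

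\emph{Right region.} On $[\eps^{\frac{1}{2k}}, \delta]$, we have $x_{n+1} - x_n \ge \lambda x_n^{2k}$, which compares with the ODE $\dot x = \lambda x^{2k}$. Consider the potential $\Phi(x) = \frac{x^{1-2k}}{(2k-1)\lambda}$, which is decreasing in $x$. By convexity of $t \mapsto t^{1-2k}$, one step reduces $\Phi$ by at least $\lambda x_n^{2k}\,|\Phi'(x_{n+1})| = (x_n/x_{n+1})^{2k}$.

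The ratio $x_{n+1}/x_n$ may be large, so I would split this region dyadically instead. On $[2^j \eps^{\frac{1}{2k}}, 2^{j+1}\eps^{\frac{1}{2k}}]$, each step advances by at least $\lambda (2^j\eps^{\frac{1}{2k}})^{2k}$. The interval has length $2^j \eps^{\frac{1}{2k}}$, so the orbit spends at most
\[
1 + \lambda^{-1}\bigl(2^j\eps^{\frac{1}{2k}}\bigr)^{1-2k}
\]
steps in it. Summing this geometric series over $j \ge 0$ gives $O(\lambda^{-1}\eps^{-\frac{2k-1}{2k}})$. The additive $1$'s contribute $O(\log(\delta\eps^{-\frac{1}{2k}}))$, which is of lower order.

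\emph{Left region.} On $[-\delta, -\eps^{\frac{1}{2k}}]$, the increment $\lambda x^{2k}$ is still bounded below by $\lambda|x|^{2k}$. The same dyadic decomposition, using shells $|x| \in [2^j\eps^{\frac{1}{2k}}, 2^{j+1}\eps^{\frac{1}{2k}}]$, gives the same bound. In both outer regions, a step may jump over a shell entirely; that only helps, because it reduces the count.

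\emph{Conclusion.} Adding the three counts gives a constant $C_1 = 2 + 4(1-2^{1-2k})^{-1}\lambda^{-1} + 1$ (with slack to absorb the logarithmic terms) such that for $\eps < \eps_0$, the orbit leaves $[-\delta,\delta]$ within $N_1$ steps. Because every step moves the orbit to the right, it must leave through the right end, so $f^{N_1}(-\delta) \ge \delta$. If the orbit is continued after leaving, monotonicity and the hypothesis no longer apply there; this is why I read the statement as being about the first exit.

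The main obstacle is the outer regions, where the orbit can overshoot shells and the comparison with the ODE is not exact. The dyadic counting argument is chosen precisely because it avoids any need to control the step size, such as derivative bounds on $f_j$. Note that this argument uses monotonicity of $f_j$ only implicitly; the lower bound per step is all that is needed.
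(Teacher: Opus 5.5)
Your proposal is correct and is essentially the paper's proof. It uses the same three-region split, the same $\eps$-per-step count across the collar $|x|\le \eps^{\frac{1}{2k}}$, and the same dyadic doubling count on $[\eps^{\frac{1}{2k}},\delta]$. The only difference is on $[-\delta,-\eps^{\frac{1}{2k}}]$. There the paper bounds the number of steps by $\int_{-\delta}^{-\eps^{1/(2k)}} \frac{dx}{\lambda x^{2k}}$, which works as a lower Riemann sum because $x^{-2k}$ increases along the orbit on that side. That gives the sharper constant $\frac{1}{\lambda(2k-1)}$ with no logarithmic term, while your dyadic shells give a slightly worse but equally sufficient constant.

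Your explicit first-exit reading is also the right way to read the statement, since the hypotheses give no control of the $f_j$ outside $[-\delta,\delta]$. The paper's proof and its use in Lemma~\ref{l:lower} implicitly rely on the same reading.
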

 
\begin{proof}
    We split the orbit into three phases. Let $x_0 = -\delta$ and $x_{m+1} = f_{m+1}(x_m)$.
 
    \textit{Phase 1: from $-\delta$ to $-\eps^{\frac{1}{2k}}$.}
    Let $M_1$ be the smallest integer such that $f^{M_1}(-\delta) \ge -\eps^{\frac{1}{2k}}$. Since $x \le -\eps^{\frac{1}{2k}}$, we have $f_j(x) - x > \lambda x^{2k} + \eps > \lambda x^{2k}$. In particular, $x \mapsto 1/(\lambda x^{2k})$ is increasing on $(-\delta, -\eps^{\frac{1}{2k}})$, so the left Riemann sum is bounded by the integral:
    \begin{align*}\label{eq:phase1}
        & \sum_{m=0}^{M_1 - 2} \frac{x_{m+1} - x_m}{\lambda x_m^{2k}}\\ & < \int_{-\delta}^{-\eps^{\frac{1}{2k}}} \frac{d x}{\lambda x^{2k}}\\ & = \frac{1}{\lambda(2k-1)} \left( \eps^{-\frac{2k - 1}{2k}} - \delta^{-(2k-1)} \right).
    \end{align*}
    Since each summand exceeds $1$ (because $x_{m+1} - x_m > \lambda x_m^{2k}$), we have \[M_1 \le \eps^{-\frac{2k - 1}{2k}} / (\lambda(2k-1))+1.\]
 
    \textit{Phase 2: from $-\eps^{\frac{1}{2k}}$ to $\eps^{\frac{1}{2k}}$.}
    Let $M_2 = \lceil 2\eps^{-\frac{2k - 1}{2k}} \rceil$. Since $f_j(x) - x > \eps$ for all $x$, we have that for any $x$ in this interval, $f^{M_2}(x) > \eps^{\frac{1}{2k}}$.
 
    \textit{Phase 3: from $\eps^{\frac{1}{2k}}$ to $\delta$.}
    Again we rely upon $f_j(x) - x > \lambda x^{2k}$. Let $M_3$ be the smallest integer such that $f^{M_3}(x) > \delta$ for all $x$ in this interval. In this regime, $x^{2k}$ grows with $x$. We estimate the transit time by a dyadic decomposition. For any $x \ge L$, the increment exceeds $\lambda L^{2k}$, so the orbit doubles from $L$ to $2L$ in at most $\lceil 1/(\lambda L^{2k-1}) \rceil$ steps. Letting $n$ be the number of times $\eps^{\frac{1}{2k}}$ must be doubled to exceed $\delta$, we have
    \begin{align*}\label{eq:phase3}
        M_3 & \le \sum_{j=0}^{n} \left\lceil \frac{1}{\lambda (2^j \eps^{\frac{1}{2k}})^{2k-1}} \right\rceil\\ & < n + \frac{\eps^{-\frac{2k - 1}{2k}}}{\lambda} \sum_{j=0}^{\infty} 2^{-j(2k-1)}\\ & = n + \eps^{-\frac{2k - 1}{2k}} \frac{2^{2k-1}}{\lambda(2^{2k-1} - 1)}.
    \end{align*}
    We observe that $n = \lceil \log_2(\delta / \eps^{\frac{1}{2k}}) \rceil$, which is dominated by $\eps^{-\frac{2k - 1}{2k}}$ as $\eps \to 0$.
 
    Combining all three phases, we can choose
    \begin{equation*}
        C_1 = \frac{1}{\lambda(2k-1)} + 2 + \frac{2^{2k-1}}{\lambda(2^{2k-1} - 1)} + 1,
    \end{equation*}
    which gives the desired property.
\end{proof}

\begin{lemma}\label{lemma:bottleneckUB}
    Let $f_j$ be a sequence of monotone increasing functions on $[-\delta, \delta]$ for some $\delta > 0$. If for some $\Lambda > 0$, any $j \in \bbN$, and any $x \in [-\delta, \delta]$ we have
    \begin{equation*}
        f_j(x) < x + \Lambda x^{2k} + \eps,
    \end{equation*}
    then there exist a constant $C_2$ (not dependent on $\eps$) and $\eps_0 > 0$ such that for all $0 < \eps < \eps_0$ and $N_2 = \lfloor C_2 \eps^{-\frac{2k - 1}{2k}} \rfloor$ we have
    \begin{equation*} \label{stepsUB}
        f^{N_2}(-\eps^{\frac{1}{2k}}  ) < 0.
    \end{equation*}
\end{lemma}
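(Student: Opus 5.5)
Starting from $x_0 = -\eps^{\frac{1}{2k}}$, we will show that the orbit cannot reach $0$ in fewer than $C_2\eps^{-\frac{2k-1}{2k}}$ steps. The argument mirrors Phase~2 of Lemma~\ref{lemma:bottleneckLB}, with the inequalities reversed. Write $x_{m+1} = f_{m+1}(x_m)$. As long as $x_m \in [-\eps^{\frac{1}{2k}}, 0]$, we have $x_m^{2k} \le \eps$, so the hypothesis gives
\begin{equation*}
    x_{m+1} - x_m < \Lambda x_m^{2k} + \eps \le (\Lambda + 1)\eps .
\end{equation*}
The displacement per step inside the left half of the collar is therefore uniformly $O(\eps)$. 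We assume $\eps_0$ is small enough that $\eps_0^{\frac{1}{2k}} \le \delta$, so the collar lies in $[-\delta,\delta]$ where the hypothesis applies.

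Set $C_2 = \frac{1}{2(\Lambda+1)}$ and $N_2 = \lfloor C_2 \eps^{-\frac{2k-1}{2k}} \rfloor$. We argue by induction on $m \le N_2$ that
\begin{equation*}
    x_m < -\eps^{\frac{1}{2k}} + m(\Lambda+1)\eps .
\end{equation*}
The right-hand side is at most $-\eps^{\frac{1}{2k}} + \tfrac12 \eps^{\frac{1}{2k}} = -\tfrac12\eps^{\frac{1}{2k}} < 0$ for $m \le N_2$. So each $x_m$ with $m<N_2$ stays in the region where the increment bound holds, and the induction step simply adds one more increment of at most $(\Lambda+1)\eps$. One subtlety must be handled: if some $x_m$ fell below $-\eps^{\frac{1}{2k}}$, the bound $x_m^{2k} \le \eps$ would fail. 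This cannot happen, however, because $f_j(x) \ge x$ is not assumed.

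To cover this we use monotonicity. Let $y_m$ denote the orbit of the comparison map $h(x) = x + \Lambda \min(x^{2k},\eps) + \eps$ started at $-\eps^{\frac{1}{2k}}$. Alternatively, we can note that if $x_m < -\eps^{\frac{1}{2k}}$, then $x_m$ is even further from $0$. A cleaner route is to run the induction on the claim
\begin{equation*}
    x_m \le \max\bigl(x_{m-1}, -\eps^{\frac{1}{2k}}\bigr) + (\Lambda+1)\eps .
\end{equation*}
This holds because on $[-\delta, -\eps^{\frac{1}{2k}}]$, monotonicity of $f_m$ gives
\begin{equation*}
    f_m(x) \le f_m\bigl(-\eps^{\frac{1}{2k}}\bigr) < -\eps^{\frac{1}{2k}} + (\Lambda+1)\eps .
\end{equation*}
This needs $x_m \ge -\delta$, which we obtain by restricting attention to points in the domain. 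Hence $\max(x_m, -\eps^{\frac{1}{2k}}) < -\eps^{\frac{1}{2k}} + m(\Lambda+1)\eps$ for all $m \le N_2$, and in particular $x_{N_2} < 0$.

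The only delicate step is this domain/monotonicity bookkeeping, namely ensuring the orbit never leaves $[-\delta,\delta]$ on the left. We will handle it as above: points stay at most $-\eps^{\frac{1}{2k}}$ by a distance that is controlled, since $f_m$ is increasing and the orbit starts at the right end of that range. If needed, we will simply note that the lemma is only ever applied with the $f_j$ mapping $[-\delta,\delta]$ into $\bbR$ with $f_j \ge$ the identity near the fixed point. The remaining steps are elementary arithmetic, with $\eps_0$ chosen so that $N_2 \ge 1$ and the collar fits inside $[-\delta,\delta]$.
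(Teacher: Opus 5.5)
Your proposal is correct and follows essentially the paper's proof: on $[-\eps^{\frac{1}{2k}},0]$ each step advances by less than $(\Lambda+1)\eps$, so on the order of $\eps^{-\frac{2k-1}{2k}}/(\Lambda+1)$ steps are needed to reach $0$ (the paper takes $C_2=(\Lambda+1)^{-1}$; your extra factor $\tfrac12$ is harmless). Your monotonicity step $f_m(x)\le f_m(-\eps^{\frac{1}{2k}})$, for orbit points that dip below the collar, is a detail the paper passes over silently and is exactly what is needed (your sentence ``This cannot happen'' should read ``This can happen''), whereas your fallback claim that the $f_j$ dominate the identity in applications is false (bad letters push points backward in Lemma~\ref{l:short-time}), but you never actually use it.
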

 
\begin{proof}
    Choose $\eps_0$ small enough so that $\eps^{\frac{1}{2k}} < \delta$. For $x \in [-\eps^{\frac{1}{2k}}, 0]$ we have $|x|^{2k} \le \eps$, hence
    \begin{equation*}
        f_j(x) - x < \Lambda x^{2k} + \eps \le (\Lambda + 1)\eps.
    \end{equation*}
    Starting from $-\eps^{\frac{1}{2k}}$, each step advances the orbit by at most $(\Lambda + 1)\eps$, so reaching $0$ requires at least
    \begin{equation*}
        \frac{\eps^{\frac{1}{2k}}}{(\Lambda + 1)\eps} = \frac{1}{\Lambda + 1} \, \eps^{-\frac{2k - 1}{2k}}
    \end{equation*}
    steps. Setting $C_2 = (\Lambda + 1)^{-1}$ establishes $f^{N_2}(-\eps^{\frac{1}{2k}}) < 0$.
\end{proof}

\section{Proof of the Main Result}\label{sec:MainProof}
The proof consists of a lower bound and an upper bound. For the lower bound, we estimate the frequency of full rotations by identifying blocks of consecutive small perturbations that allow the orbit to pass through the bottleneck. For the upper bound, we use the complementary approach and control the contribution of sufficiently large perturbations that keep us from passing through the bottleneck.

Under assumptions \ref{der:E}, \ref{der:y}, and \ref{fixedPoint}, we have
\begin{equation}\label{eq:no-backtracking}
    G^n_{E,\bomega}(x)>-1
    \quad \text{for every } x\ge 0,\ 
    \bomega\in\supp(\mu)^{\bbN},\ E\ge 0,\ n\in\bbN,
\end{equation}
and $\rho(0)=0$. Indeed, we can choose the representative
$x_*\in(-1,0)$ satisfying \ref{fixedPoint}. For $E=0$, the
interval $[x_*,0]$ is invariant: the lower bound follows from monotonicity in
$x$ and $G_{0,y}(x_*)>x_*$, while the upper bound follows from monotonicity in
$y$ and $x$, together with $G_{0,\bar 0}(0)=0$. Hence the lift orbit is bounded
at $E=0$, so $\rho(0)=0$.

Moreover, by \ref{fixedPoint}, monotonicity in $E$, and periodicity of the lift,
\[
G_{E,y}(x_*+m)>x_*+m
\quad
\text{for all } E\ge0,\ y\in\supp(\mu),\ m\in\bbZ.
\]
In particular, for $x\ge0$, iteration gives
$G^n_{E,\bomega}(x)>x_*>-1$.

\begin{lemma} \label{lm:GBounds}
    There are $\delta > 0$ and positive constants $c_1, c_2, \gamma_E,  \Gamma_E, \gamma_y, \Gamma_y$  such that for every $x \in [- \delta, \delta]$, $E \in [0, \delta]$ and $y \in [0, \delta]^d$,
    \begin{equation*}
        x + c_1 x^{2k} + \gamma_E E - d \Gamma_y \|y\| \,\le\, G_{E, y} (x) \,\le\, x + c_2 x^{2k} + \Gamma_E E - \gamma_y \|y\|,
    \end{equation*}
    where $\|y\| = \max_{j = 1}^{d} |y_j|$.
\end{lemma}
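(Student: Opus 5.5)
Write $G_{E,y}(x) = G_{0,\bar 0}(x) + \bigl(G_{E,y}(x) - G_{E,\bar 0}(x)\bigr) + \bigl(G_{E,\bar 0}(x) - G_{0,\bar 0}(x)\bigr)$. By \ref{derder:x} the first term lies between $x + c_1x^{2k}$ and $x + c_2x^{2k}$ for $|x|\le\delta_0$, so it suffices to bound the two increments by the mean value theorem. This uses the continuity of the partial derivatives guaranteed by \ref{analyt} and the sign conditions at the origin from \ref{der:E} and \ref{der:y}.

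\emph{Step 1 (the $E$-increment).} By \ref{der:E}, $a := \partial_E G_{E,y}(x)|_{(0,\bar 0,0)} > 0$. By continuity of $\partial_E G$ in $(E,y,x)$ (from \ref{analyt}), there is a $\delta_1>0$ such that $\tfrac a2 \le \partial_E G_{E,y}(x) \le 2a$ whenever $|E|,\|y\|,|x| \le \delta_1$. The mean value theorem in $E$ at fixed $y=\bar 0$ then gives
\[
\tfrac a2 E \le G_{E,\bar 0}(x)-G_{0,\bar 0}(x) \le 2aE
\quad\text{for } E\in[0,\delta_1],\ |x|\le\delta_1 .
\]
We set $\gamma_E=a/2$ and $\Gamma_E=2a$.

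\emph{Step 2 (the $y$-increment).} Let $b_j := -\partial_{y_j}G|_{(0,\bar 0,0)}>0$, and put $b_{\min}=\min_j b_j$ and $b_{\max}=\max_j b_j$. Shrinking $\delta_1$ if necessary, continuity gives $-2b_{\max}\le \partial_{y_j}G_{E,y}(x)\le -\tfrac{b_{\min}}{2}$ for all $j$ on the same box. Move from $\bar 0$ to $y$ along the straight segment $t\mapsto ty$. Since $y\in[0,\delta_1]^d$, the chain rule gives
\[
G_{E,y}(x)-G_{E,\bar 0}(x)=\int_0^1 \sum_{j} \partial_{y_j}G_{E,ty}(x)\,y_j\,dt .
\]
Each summand is $\le 0$. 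For the upper bound, keep only an index $j^*$ with $y_{j^*}=\|y\|$; this yields the bound $\le -\tfrac{b_{\min}}{2}\|y\|$, so $\gamma_y=b_{\min}/2$. For the lower bound, each term is $\ge -2b_{\max}y_j\ge -2b_{\max}\|y\|$, and summing over $j$ yields the bound $\ge -d\cdot 2b_{\max}\|y\|$, so $\Gamma_y=2b_{\max}$. This accounts for the factor $d$ in the statement.

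\emph{Step 3 (assembly).} Take $\delta=\min(\delta_0,\delta_1)$ and add the three pieces. The constants $c_1,c_2$ are those of \ref{derder:x}.

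The only delicate point is that the partial derivatives must be controlled uniformly on a full neighborhood of $(0,\bar 0,0)$, not just at the origin. This is exactly where the $C^1$ joint regularity in \ref{analyt} is needed. The global sign conditions in \ref{der:E} and \ref{der:y} are not required for this local statement.
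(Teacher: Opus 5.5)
Your proposal is correct and follows the paper's own argument. The paper also uses continuity of the partial derivatives near $(0,\bar 0,0)$ to get uniform two-sided bounds, then applies the mean value theorem in $E$ and in $y_1,\dots,y_d$, with (G2) supplying the $x^{2k}$ terms; you simply make the three-term decomposition, the explicit constants, and the origin of the factor $d$ more transparent.
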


\begin{proof}
By \ref{derder:x} we have for small enough $\delta,$
\[
    x+c_1 x^{2k}\le G_{0,\bar{0}}(x)\le x+c_2 x^{2k}
    \quad\text{for } x\in[-\delta,\delta].
\]
By continuity of the derivatives and the sign assumptions in \ref{der:E} and \ref{der:y}, we can choose $\delta$ small enough so that
\[
    \gamma_E\le \frac{\partial G_{E,y}(x)}{\partial E} \le \Gamma_E,
    \qquad
    -\Gamma_y\le \frac{\partial G_{E,y}(x)}{\partial y_j} \le -\gamma_y
\]
on this neighborhood, for suitable positive constants. The mean value theorem in $E,y_1,\ldots,y_d$ gives the desired result.
\end{proof}

\subsection{Lower Bound}\label{sec:lower}
Choose $b>0$ small enough that $d\,\Gamma_y b \le \gamma_E/2$. For $E>0$, call a block $(y_1,\ldots,y_N)$ \emph{good} if $y_j \in[0,bE]^d$ for all $j.$

\begin{lemma}\label{l:lower}
For all sufficiently small $E>0$, if $N$ is a sufficiently large multiple of $E^{-\frac{2k-1}{2k}}$, then every good block of length $N$ satisfies
\[
G_{E,y_1\dots y_N}^N (x_*) \ge x_*+1.
\]
Otherwise, $G_{E,y_1\dots y_N}^N (x_*) > x_*.$
\end{lemma}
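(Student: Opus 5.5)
The plan is to follow the orbit of $x_*$ around the circle in three stages and to apply Lemma~\ref{lemma:bottleneckLB} only on the stage through the neighborhood of the parabolic point. Fix $\delta$ as in Lemma~\ref{lm:GBounds} and let $y$ be good, so $\|y\|\le bE$. Lemma~\ref{lm:GBounds} then gives, for $x\in[-\delta,\delta]$,
\[
G_{E,y}(x)\ \ge\ x+c_1x^{2k}+\gamma_E E-d\Gamma_y bE\ \ge\ x+c_1x^{2k}+\tfrac{\gamma_E}{2}E .
\]
Hence the maps $f_j=G_{E,y_j}$ restricted to $[-\delta,\delta]$ are increasing and satisfy the hypothesis of Lemma~\ref{lemma:bottleneckLB} with $\lambda$ slightly less than $c_1$ and $\eps=\gamma_E E/2$. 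That lemma yields at most $N_1=\lceil C_1' E^{-\frac{2k-1}{2k}}\rceil$ iterates to go from $-\delta$ to $\delta$.

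Outside $(-\delta,\delta)$, on the compact set $K=[x_*,-\delta]\cup[\delta,x_*+1]$, I will use that $G_{0,\bar 0}(x)>x$ on $K$. This holds because $0$ is the unique fixed point of $g_{0,\bar0}$ and the lift lies above the diagonal near $0$ by \ref{derder:x}. By compactness and continuity \ref{analyt}, there are $\eta>0$ and a neighborhood of $(E,y)=(0,\bar 0)$ on which $G_{E,y}(x)\ge x+\eta$ for all $x\in K$. In particular this holds for all small $E$ and all good $y$. So at most $2/\eta+2$ iterates suffice to move from $x_*$ past $-\delta$, and from $\delta$ past $x_*+1$. Monotonicity of each $G_{E,y}$ in $x$ lets me concatenate the stages: once the orbit exceeds a threshold it stays above the orbit of the threshold point. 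A constant number of extra steps is absorbed into the multiple of $E^{-\frac{2k-1}{2k}}$ for small $E$.

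The point to handle is that $N$ is ``a sufficiently large multiple'', so the orbit might run out of stages before step $N$. This is harmless, because once the orbit is at or above $x_*+1$, periodicity of the lift together with $G_{E,y}(x_*+1)>x_*+1$ (from \ref{fixedPoint} and \ref{der:E}) keeps it above $x_*+1$ for all remaining iterates. For the ``otherwise'' clause, meaning an arbitrary block in $\supp(\mu)^N$ that need not be good, I will use $G_{E,y}(x_*)\ge G_{0,y}(x_*)>x_*$ by \ref{der:E} and \ref{fixedPoint}. Induction with monotonicity in $x$ then gives $G^N(x_*)>x_*$.

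I expect the main obstacle to be the uniform lower bound $\eta$ on $K$, which relies on $G_{0,\bar0}(x)>x$ away from $0$. This needs uniqueness of the fixed point together with the sign near $0$ coming from \ref{derder:x} on both sides. A second point is that the $y$-range of good blocks shrinks with $E$, so all constants must be uniform as $E\downarrow 0$; Lemma~\ref{lm:GBounds} and compactness handle this.
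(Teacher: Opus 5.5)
Your proposal is correct and follows the paper's proof essentially step for step. It uses a bounded number of good iterates on $[x_*,-\delta]\cup[\delta,x_*+1]$, where $G_{E,y}(x)-x$ is uniformly positive by uniqueness of the fixed point and continuity; then Lemma~\ref{lemma:bottleneckLB} with $\eps$ of order $\gamma_E E$ inside $[-\delta,\delta]$; and finally \ref{der:E}, \ref{fixedPoint} and monotonicity in $x$ for the second claim. You also fill in details the paper leaves implicit (the sign of $G_{0,\bar 0}(x)-x$ on $K$, concatenation of the stages via monotonicity, and the orbit staying above $x_*+1$ once it gets there). One harmless nit: to obtain the strict inequality required in Lemma~\ref{lemma:bottleneckLB} at $x=0$, you should shrink $\eps$ (say to $\gamma_E E/3$) rather than $\lambda$.
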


\begin{proof}
For the first claim, choose $\delta$ from Lemma~\ref{lm:GBounds} with $x_*<-\delta<0<\delta<x_*+1$. Since $g_{0,\bar{0}}$ has no fixed point on the circle except $0$, the function $G_{0,\bar{0}}(x)-x$ is bounded away from $0$ on $ [x_*,x_*+1]\setminus(-\delta,\delta).$ By continuity, for all sufficiently small $E$ and all good $y$, this difference is still bounded away from $0$ on the same set. Thus, only a bounded number of good iterates is needed to move from $x_*$ to $-\delta$, and later from $\delta$ to $x_*+1$. Inside $[-\delta,\delta]$, for a good letter and small $E$,
\[
    G_{E,y}(x)
    \ge x+c_1 x^{2k}+\gamma_E E-d\,\Gamma_ybE
    \ge x+c_1 x^{2k}+\frac{\gamma_E}{2}E.
\]
We then apply Lemma~\ref{lemma:bottleneckLB}, using with $\eps=(\gamma_E/2)E$, to see that a constant multiple of $E^{-\frac{2k-1}{2k}}$ such iterates sends $-\delta$ to at least $\delta$. Increasing the multiplicative constant in $N$ absorbs the bounded number of iterates outside the bottleneck.

The second claim follows from \eqref{eq:no-backtracking} and monotonicity in $x$.
\end{proof}

The lemma immediately implies:
\begin{coro}\label{c:lower}
For all sufficiently small $E>0$, if $N=\lceil A E^{-\frac{2k-1}{2k}}\rceil$ with $A$ large enough, then
    \[
        \rho(E)\ge \frac{1}{N} \Prob(\text{block $(y_1,\dots, y_N)$ is good}) \ge \frac{1}{N} (C(bE)^l)^{N}.
    \]
\end{coro}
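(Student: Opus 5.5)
The plan is to split a typical sample path $\bomega$ into consecutive disjoint blocks of length $N$ and to count how often the orbit of $x_*$ completes a full turn. Set $x_0=x_*$ and $x_{m+1}=G^N_{E,\sigma^{mN}\bomega}(x_m)$. By Proposition~\ref{prop:rotation} applied with $x=x_*$,
\[
\rho(E)=\lim_{M\to\infty}\frac{x_M-x_*}{MN}.
\]
The task is to show that each block contributes at least $1$ to the displacement whenever it is good, and never a negative amount.

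\textbf{Step 1: displacement per block.} Write $x_m=x_*+n_m+r_m$ with $n_m\in\bbZ$ and $r_m\in[0,1)$.
\begin{itemize}
\item Monotonicity in $x$ and periodicity of the lift give $G^N(x_m)\ge G^N(x_*+n_m)=G^N(x_*)+n_m$.
\item If the $(m+1)$-st block is good, Lemma~\ref{l:lower} gives $G^N(x_*)\ge x_*+1$. Hence $x_{m+1}\ge x_*+n_m+1$, so $n_{m+1}\ge n_m+1$.
\item For every block, the second part of Lemma~\ref{l:lower} gives $G^N(x_*)>x_*$, so $n_{m+1}\ge n_m$.
\end{itemize}
Consequently $n_M\ge\#\{m<M:\text{block } m \text{ is good}\}$. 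Since $x_M-x_*\ge n_M$, we get
\[
\frac{x_M-x_*}{MN}\ge\frac{1}{N}\cdot\frac{\#\{\text{good blocks among the first } M\}}{M}.
\]

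\textbf{Step 2: counting good blocks.} The blocks are disjoint, so they are i.i.d. By the strong law of large numbers the frequency of good blocks converges almost surely to $\Prob(\text{block is good})$. This yields the first inequality of the corollary.

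\textbf{Step 3: probability of a good block.} By independence,
\[
\Prob(\text{block is good})=\mu\bigl([0,bE]^d\bigr)^N .
\]
Assumption~\ref{ineq:dens} bounds this from below by $(C(bE)^l)^N$.

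\textbf{Main obstacle.} The only subtle point is Step~1. Lemma~\ref{l:lower} controls the orbit of $x_*$ alone, while a new block starts at the point $x_m$, which need not be a translate of $x_*$. This is handled by reducing to the integer translate $x_*+n_m\le x_m$ through monotonicity and periodicity. One must also check that Lemma~\ref{l:lower} applies with the same $N=\lceil AE^{-\frac{2k-1}{2k}}\rceil$ uniformly for all small $E$; this holds by the choice of $A$ made there.
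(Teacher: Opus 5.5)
Your proposal is correct and follows the paper's argument. You split the path into blocks of length $N$, use Lemma~\ref{l:lower} to get a full turn for each good block and no backward loss otherwise, and then apply the law of large numbers together with \ref{ineq:dens}. Your reduction from $x_m$ to the integer translate $x_*+n_m$ via monotonicity and periodicity makes explicit a step that the paper's one-line proof leaves implicit.
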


\begin{proof}
    It follows from Lemma~\ref{l:lower} that for a composition of $m$ blocks, of total length $n=mN$, one has 
    \[
        G^{n}_{E,y_1\dots y_n}(x_*)\ge x_* + \# \{j\le m  \mid \text{block $y_{jN+1},\dots ,y_{(j+1)N}$ is good}\}.
    \]
    Dividing by $n$, passing to the limit, and applying the law of large numbers, one obtains the statement of the corollary.
\end{proof}

\subsection{Upper Bound}
We will now construct blocks used to prove the upper bound. Since $0\in\supp(\mu)$ by \ref{ineq:dens}, and $\supp(\mu)$ contains at least two points, choose $\hat y\in\supp(\mu)$ with $\hat y\ne\bar 0$. For some
coordinate $j$ we have $\hat y_j>0$. Since
\[
    \left.\frac{\partial G_{E,y}(x)}{\partial y_j}\right|_{E=0,y=\bar 0,x=0}<0
\]
and $G_{0,y}(0)$ is nonincreasing in each coordinate of $y$, we have
$G_{0,\hat y}(0)<0$. Hence there exists a small $\eta>0$ such that
\[
    G_{0,\hat y}(\eta)<-2\eta .
\]
By continuity, there is a neighborhood $U$ of $\hat y$ such that, for all
sufficiently small $E>0$,
\[
    G_{E,y}(\eta)<-\eta
    \quad\text{for every } y\in U.
\]
Set $p_1:=\mu(U)>0$.
We call letters in $U$ \emph{bad}. A block is \emph{bad} if it contains at
least one bad letter.

\begin{lemma}\label{l:short-time}
For all sufficiently small $E>0$, if $n$ is at most a sufficiently small
multiple of $E^{-\frac{2k-1}{2k}}$, then for every sequence
$y_1,\ldots,y_n\in\supp(\mu)$ and every $x\le0$,
\[
    G^n_{E,y_1\dots y_n}(x)<\eta.
\]
If, moreover, $x\le-\eta$, then
\[
    G^n_{E,y_1\dots y_n}(x)<0.
\]
\end{lemma}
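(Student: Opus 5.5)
The plan is to reduce everything to the worst-case sequence and then apply Lemma~\ref{lemma:bottleneckUB}. Since $G_{E,y}$ is nonincreasing in each $y_j$ on $\bbR_+^d$ by \ref{der:y}, and $\supp(\mu)\subset[0,\infty)^d$, we have $G_{E,y}\le G_{E,\bar 0}$ pointwise for every $y\in\supp(\mu)$. Each map is increasing in $x$, so by induction
\[
G^n_{E,y_1\dots y_n}(x)\le G^n_{E,\bar 0\dots\bar 0}(x)\le G^n_{E,\bar 0\dots \bar 0}(0)
\quad\text{for } x\le 0 .
\]
It therefore suffices to bound the deterministic orbit of $0$ (and of $-\eta$) under $F:=G_{E,\bar 0}$.

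Near the origin, Lemma~\ref{lm:GBounds} with $y=\bar 0$ gives $F(x)\le x+c_2x^{2k}+\Gamma_E E$ on $[-\delta,\delta]$. Set $\eps=\Gamma_E E$.

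\emph{Second claim.} Take $\eta\le\delta$, shrinking $\eta$ if necessary; the defining property $G_{0,\hat y}(\eta)<-2\eta$ survives shrinking by continuity, since $G_{0,\hat y}(0)<0$. For small $E$ we have $\eps^{1/(2k)}<\eta$, so $x\le-\eta$ implies $x\le-\eps^{1/(2k)}$. Monotonicity gives $F^n(x)\le F^n(-\eps^{1/(2k)})$. Lemma~\ref{lemma:bottleneckUB} shows this is $<0$ for $n=N_2$. For $n\le N_2$ the same holds, since the proof of that lemma shows the orbit of $-\eps^{1/(2k)}$ stays in $[-\eps^{1/(2k)},0)$ for at least $N_2$ steps. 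One detail needs care: Lemma~\ref{lemma:bottleneckUB} is stated for $f_j$ defined on $[-\delta,\delta]$, and the orbit stays below $0$ and above $-\delta$. To see the lower bound, note $F(x)\ge x$ near $0$ for $E\ge0$ by the lower estimate in Lemma~\ref{lm:GBounds}, so the orbit is nondecreasing. Hence the constant "sufficiently small multiple" is $C_2=(c_2+1)^{-1}$ times $\Gamma_E^{-(2k-1)/(2k)}$.

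\emph{First claim.} Starting from $x\le 0$, it suffices to bound $F^n(0)$. For $x\in[0,\eta]$ we have $F(x)-x\le c_2x^{2k}+\eps\le c_2\eta^{2k-1}x+\eps$. Alternatively, a cruder but sufficient bound uses the dyadic reasoning of Phase 3 in reverse: as long as the orbit stays below $2\eps^{1/(2k)}$, each step advances by at most $(c_2 2^{2k}+1)\eps$. So reaching $2\eps^{1/(2k)}$ from $0$ takes at least a constant times $\eps^{-(2k-1)/(2k)}$ steps. Since $2\eps^{1/(2k)}<\eta$ for small $E$, we get $F^n(0)<\eta$ for $n$ at most a small multiple of $E^{-(2k-1)/(2k)}$. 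Taking the minimum of the two constants gives the lemma.

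The main obstacle is bookkeeping rather than ideas. We must ensure that the domination $G_{E,y}\le G_{E,\bar 0}$ is applied correctly, which requires monotonicity in $x$ for the iterated comparison. We must also ensure that the orbit never leaves the window $[-\delta,\delta]$ where the local bounds of Lemma~\ref{lm:GBounds} hold. Because $F(x)\ge x$ there and the orbit stays below $\eta\le\delta$, this is automatic once $E$ is small.
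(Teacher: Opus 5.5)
Your proposal is correct and follows essentially the same route as the paper: you reduce to $G_{E,\bar 0}$ by monotonicity in $x$ and $y$, bound the increment by a constant multiple of $E$ below the collar scale to get the first claim, and apply Lemma~\ref{lemma:bottleneckUB} with $\eps=\Gamma_E E$ plus monotonicity in $x$ for the second. Your extra bookkeeping tidies up points the paper leaves implicit: you make the reduction to $G_{E,\bar 0}$ before using Lemma~\ref{lm:GBounds}, which only covers $y\in[0,\delta]^d$, and you check that the orbit stays in $[-\delta,\delta]$ because $G_{E,\bar 0}(x)\ge x$ there.
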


\begin{proof}
Choose $\eta<\delta$, with $\delta$ as in Lemma~\ref{lm:GBounds}.
Since the maps are increasing in $x$ and nonincreasing in $y$,
\[
    G^n_{E,y_1\dots y_n}(x)\le G^n_{E,\bar 0}(0)
    \quad\text{for } x\le0.
\]
By Lemma~\ref{lm:GBounds},
\[
    G_{E,\bar 0}(z)\le z+c_2z^{2k}+\Gamma_EE
    \quad\text{for } z\in[0,\delta].
\]
As long as $0\le z\le E^{1/(2k)}$, the increment is bounded by a constant
multiple of $E$. Therefore, if the multiplicative constant in the upper bound
on $n$ is chosen sufficiently small, the orbit of $0$ cannot reach $E^{1/(2k)}$ in
$n$ steps. Since $E^{1/(2k)}<\eta$ for small $E$, this proves the first claim.

For the second claim, Lemma~\ref{lm:GBounds} gives
\[
    G_{E,y}(z)\le z+c_2z^{2k}+\Gamma_EE
    \quad\text{for } z\in[-\delta,\delta],\ y\in\supp(\mu).
\]
Lemma~\ref{lemma:bottleneckUB}, applied with
$\eps=\Gamma_EE$, shows that a trajectory starting at
$-(\Gamma_EE)^{1/(2k)}$ cannot reach $0$ in fewer than a constant multiple of
$E^{-\frac{2k-1}{2k}}$ steps. Since
$-\eta<-(\Gamma_EE)^{1/(2k)}$ for small $E$, monotonicity in $x$ gives the
same conclusion for every $x\le-\eta$.
\end{proof}

\begin{figure}
        \centering
        \includegraphics[width=0.5\linewidth]{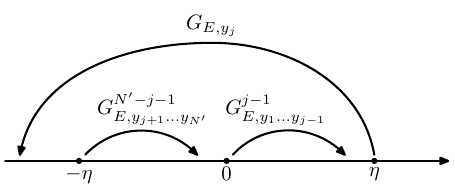}
        \caption{The orbit of a point $x$ near $0$ under the composition $G$ associated with a bad block of length $N'$, with bad letter $y_j$.}
        \label{fig:bottleneck-blocked}
    \end{figure}

\begin{lemma}\label{l:upper}
For all sufficiently small $E>0$, let
\[
    N'=\left\lfloor aE^{-\frac{2k-1}{2k}}\right\rfloor,
\]
where $a>0$ is sufficiently small. Then for every block of length $N'$, the associated
composition maps every $x\le0$ to a point less than $1$. If the block is bad, then the associated composition maps every $x\le0$ to a point less than $0$.
\end{lemma}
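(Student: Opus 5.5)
Both claims of Lemma~\ref{l:upper} will follow from Lemma~\ref{l:short-time}, once $a$ is chosen at most the small multiplicative constant appearing there. Fix such an $a$ and let $E$ be small enough that Lemma~\ref{l:short-time} applies to every $n\le N'$. Throughout I will use that each $G_{E,y}$ is increasing in $x$, which lets me reduce any starting point $x\le 0$ to the extreme case $x=0$.

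\emph{First claim.} For any block $y_1,\dots,y_{N'}$ and any $x\le0$, Lemma~\ref{l:short-time} with $n=N'$ gives $G^{N'}_{E,y_1\dots y_{N'}}(x)<\eta$. Since $\eta<\delta<1$, the image is less than $1$.

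\emph{Second claim.} Let $j$ be the position of the first bad letter, so $y_j\in U$, and set $z=G^{j-1}_{E,y_1\dots y_{j-1}}(x)$. I will argue in three steps.
\begin{enumerate}
\item Since $j-1\le N'$, Lemma~\ref{l:short-time} gives $z<\eta$. (If $j=1$, then $z=x\le 0<\eta$ directly.)
\item By monotonicity in $x$ and the defining property of $U$, we get $G_{E,y_j}(z)\le G_{E,y_j}(\eta)<-\eta$.
\item The remaining $N'-j$ letters act on a point $\le-\eta$. Since $N'-j\le N'$, the second part of Lemma~\ref{l:short-time} gives a final image $<0$. If $j=N'$, no letters remain and the image is already $<-\eta<0$.
\end{enumerate}

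The main obstacle is consistency of constants. The inequality $G_{E,y}(\eta)<-\eta$ for $y\in U$ must hold with the same $\eta$ used in Lemma~\ref{l:short-time}, and $\eta<\delta$ is required there. I would choose $\eta$ first, then $U$, then $E$ small, and check that Lemma~\ref{l:short-time} was proved for an arbitrary sufficiently small $\eta<\delta$. If it was not, I would shrink $\eta$ at the outset, which keeps $G_{0,\hat y}(\eta)<-2\eta$ valid by continuity since $G_{0,\hat y}(0)<0$.

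A second point to verify is that Lemma~\ref{l:short-time} applies to letters in $\supp(\mu)$ only, so bad letters must also lie in the support. This holds almost surely, because $U\cap\supp(\mu)$ carries all of the mass $\mu(U)$. If needed, I would state the lemma for blocks whose letters lie in $\supp(\mu)$.
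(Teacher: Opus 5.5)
Your proposal is correct and follows essentially the same argument as the paper. The first claim comes directly from Lemma~\ref{l:short-time} (using $\eta<1$), and the second splits the block at a bad letter $y_j$: the prefix lands below $\eta$, the bad letter sends the point below $-\eta$ via $G_{E,y_j}(\eta)<-\eta$, and the suffix is kept below $0$ by the second part of Lemma~\ref{l:short-time}. Your extra bookkeeping only makes explicit what the paper leaves implicit, namely the edge cases $j=1$ and $j=N'$, the order in which $\eta$, $U$, $E$ are chosen, and restricting the letters to $\supp(\mu)$.
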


\begin{proof}
The first statement follows from Lemma~\ref{l:short-time}, since $\eta<1$.

Suppose the block is bad, and let $y_j$ be a bad letter in the block. By
Lemma~\ref{l:short-time}, the prefix preceding $y_j$ sends any $x\le0$ to some
$x'<\eta$. Since $G_{E,y_j}$ is increasing and $y_j$ is bad,
\[
    G_{E,y_j}(x')\le G_{E,y_j}(\eta)<-\eta.
\]
The suffix has length at most $N'$, so the second part of
Lemma~\ref{l:short-time} keeps the image below $0$.
\end{proof}

The lemma immediately implies:
\begin{coro}\label{c:upper}
For all sufficiently small $E>0$, if
$N'=\lfloor aE^{-\frac{2k-1}{2k}}\rfloor,$ with $a>0$ sufficiently small, then
\[
    \rho(E)
    \le \frac{1}{N'}\Prob(\text{block $(y_1,\dots ,y_{N'})$ is not bad})
    \le \frac{1}{N'}(1-p_1)^{N'}.
\]
\end{coro}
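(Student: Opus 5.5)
We need to prove Corollary~\ref{c:upper}: $\rho(E)\le \frac{1}{N'}\Prob(\text{block not bad})\le \frac1{N'}(1-p_1)^{N'}$. The argument runs parallel to the proof of Corollary~\ref{c:lower}. We cut the sequence into consecutive blocks of length $N'$ and count how many full rotations the orbit of a point can make. By Proposition~\ref{prop:rotation} we may compute $\rho(E)$ from the starting point $x=0$, and we use $\rho(0)=0$. The key inductive claim is
\[
G^{mN'}_{E,\bomega}(0)\le \#\{0\le j< m \mid \text{block } (\omega_{jN'+1},\dots,\omega_{(j+1)N'}) \text{ is not bad}\}
\]
for every $m$ and every $\bomega\in\supp(\mu)^{\bbN}$.

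We prove the claim by induction, using a slightly stronger statement: the orbit after $j$ blocks lies in $(-\infty, s_j]$ with $s_j\in\bbZ$ equal to the count so far. Suppose $G^{jN'}(0)\le s_j$. Write $z=G^{jN'}(0)-s_j\le 0$. By periodicity of the lift, the next block applied to $G^{jN'}(0)$ equals $s_j$ plus the next block applied to $z$.
\begin{itemize}
\item If the next block is bad, the second part of Lemma~\ref{l:upper} sends $z$ below $0$, so $s_{j+1}=s_j$ works.
\item Otherwise the first part of Lemma~\ref{l:upper} sends $z$ below $1$, so $s_{j+1}=s_j+1$ works.
\end{itemize}
One subtlety: Lemma~\ref{l:upper} is stated for $x\le 0$ and we need it for all such $x$, including very negative ones. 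It does cover all $x\le 0$, since it rests on Lemma~\ref{l:short-time}, which holds for every $x\le 0$ by monotonicity in $x$.

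Next we divide by $mN'$ and let $m\to\infty$. The blocks are i.i.d., and each is not bad with probability $(1-p_1)^{N'}$, since each letter independently avoids $U$ with probability $1-p_1$. The strong law of large numbers then gives, almost surely,
\[
\rho(E)=\lim_{m}\frac{G^{mN'}(0)}{mN'}\le \frac{1}{N'}\Prob(\text{not bad})=\frac{1}{N'}(1-p_1)^{N'}.
\]
Passing to the subsequence $n=mN'$ is harmless because the full limit exists almost surely. Finally, $\omega_i\in\supp(\mu)$ almost surely, so the deterministic claim applies almost surely.

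The main obstacle is making the induction legitimate. This requires the integer-translation invariance $G_{E,y}(x+1)=G_{E,y}(x)+1$ of the lift, and the fact that Lemma~\ref{l:upper} holds for every block of letters from $\supp(\mu)$ and every $x\le0$. Both are already available, so the rest is routine bookkeeping.
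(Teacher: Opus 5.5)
Your proposal is correct and follows essentially the same argument as the paper. You cut the orbit into blocks of length $N'$ and subtract the integer displacement after each block using periodicity of the lift. You then apply Lemma~\ref{l:upper}: a bad block adds no full turn and any block adds at most one. The strong law of large numbers then gives the bound, with probability $(1-p_1)^{N'}$ that a block is not bad. Your induction with $s_j$, and your remarks that a.e.\ $\bomega$ lies in $\supp(\mu)^{\bbN}$ and that passing to the subsequence $n=mN'$ is harmless, only make explicit what the paper's proof leaves implicit.
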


\begin{proof}
It follows from Lemma~\ref{l:upper} that for a composition of $m$ blocks, of total length $n=mN'$, one has
\[
    G^n_{E,y_1\dots y_n}(0)
    \le
    \#\{1\le j\le m:\text{ block }y_{(j-1)N'+1}\dots y_{jN'}\text{ is not bad}\}.
\]
Indeed, after each block, we subtract the current integer displacement and use
periodicity, so that the composition associated with the next block is applied
to a point $x\le0$. By Lemma~\ref{l:upper}, a bad block contributes no new
full turn, while an arbitrary block contributes at most one. Dividing by $n$,
passing to the limit, and applying the law of large numbers gives the
statement. Since a block is not bad exactly when none of its $N'$ letters lies
in $U$, this probability is $(1-p_1)^{N'}$.
\end{proof}

\subsection{Asymptotics of the Rotation Number}
We are now ready to conclude the proof of our main result, Theorem~\ref{thm:main}.

\begin{proof}[Proof of Theorem~\ref{thm:main}]
Since $\rho(0)=0,$ the lower estimate gives, for $N = \lceil AE^{-\frac{2k-1}{2k}} \rceil$ with $A$ sufficiently large,
 \[
 \rho(E) \ge \frac{1}{N} \left(C (bE)^l \right)^N 
 \]
 and thus 
 \begin{align*}\label{eq:rho-lower}
    -\ln (\rho(E)) & \le  \ln (N) - N ( l \ln (E) + \ln (b^l C))\\ & \le 2 l N | \ln (E)|,     
 \end{align*}
 as both $\ln (N) = o(N \ln (E))$ and $N= o(N \ln (E))$ as $E\to 0$.

 By the choice of $N$, one has 
 \[
 \ln (N)  \sim -\frac{2k-1}{2k} \ln (E) \quad \text{and} \quad \ln (-\ln (E))=o(\ln( N))
 \]
 as $E\to 0$. Changing signs and taking one more logarithm gives
 \begin{align*}
    \ln (-\ln \rho(E)) & \le \ln (-2l \cdot N \ln E)\\ & = \ln 2l + \ln (-\ln E) + \ln N\\ & = \ln N \cdot (1+o(1)).
 \end{align*}
 Thus,
 \begin{equation}\label{eq:liminf-lower}
    \liminf_{E\downarrow 0}\frac{\ln (-\ln \rho(E))}{\ln E} \ge \lim_{E\downarrow 0} \frac{\ln N}{\ln E} = -\frac{2k-1}{2k}     
 \end{equation}

Similarly, Corollary~\ref{c:upper} gives, for all sufficiently small $E>0$
\begin{align*}\label{eq:rho-upper}
- \ln (\rho(E)) & \ge \ln (N') - N'\ln (1-p_1)\\ & \ge c'N',    
\end{align*}
where $c':= -\frac{1}{2} \ln (1-p_1)>0$.
Taking logarithms and dividing by~$\ln (E)<0$, we obtain
\[
\frac{\ln (-\ln( \rho(E)))}{\ln (E)} \le \frac{\ln (c')+ \ln (N')}{\ln (E)}.
\]
As $\ln (N')\sim -\frac{2k-1}{2k} \ln (E)$, we have
\begin{equation}\label{eq:limsup-upper}
\limsup_{E\downarrow 0} \frac{\ln (-\ln (\rho(E)))}{\ln (E)} \le \lim_{E\downarrow 0} \frac{\ln (N')}{\ln (E)} = -\frac{2k-1}{2k}.
\end{equation}
Combining~\eqref{eq:limsup-upper} with~\eqref{eq:liminf-lower}, we obtain
\[
\lim_{E\downarrow 0} \frac{\ln (-\ln( \rho(E)))}{\ln (E)} = -\frac{2k-1}{2k}.
\]
This completes the proof.
\end{proof}

\section{Proof of the Application to the Anderson Model}\label{sec:Anderson}
The Anderson model, introduced in \cite{Anderson1958}, describes a quantum particle moving on a lattice in the presence of random impurities. It remains a central object of study in the theory of random Schr\"odinger operators.

Lifshitz tails for the continuous model were first established rigorously by Donsker and Varadhan \cite{DonVar1975}, using large-deviation techniques. For the discrete Anderson model, Simon \cite{Simon1985} provided an alternative proof using Dirichlet--Neumann bracketing. Both approaches are fundamentally spectral in nature. Here, we recover Lifshitz tails for the one-dimensional discrete Anderson model as a consequence of Theorem~\ref{thm:main}, thereby giving a purely dynamical proof.
The point is not merely to reprove a known result. The transfer matrices associated to the eigenvalue equation act projectively as circle diffeomorphisms, and at the spectral edges, this projective action has a quadratic parabolic fixed point. The Anderson model thus falls naturally within the scope of our main theorem. Thus, the Lifshitz exponent $-1/2$ for the Anderson model emerges from the same bottleneck geometry that governs general random circle diffeomorphisms near a parabolic point. To make this precise, we recall the setup and verify the hypotheses of Theorem~\ref{thm:main} in the quadratic parabolic case arising at the spectral edge. 

Let $\mu$ be a probability measure on $\bbR$ whose support is $[a,b],$ and set 
\[
\Omega:=\supp(\mu)^{\bbZ} \quad \text{ and } \quad \bbP:=\mu^{\bbZ}.
\]
For $\omega\in \Omega$ let $\{V_{\omega}(n)\}_{n\in\bbZ}$ be a sequence of i.i.d.\ random variables with distribution $\mu$ on the probability space $(\Omega,\bbP)$. The Anderson model is the family of random Schr\"odinger operators $\{H_\omega\}_{\omega\in \Omega}$ acting on $\ell^2(\bbZ)$ by
\begin{equation*}\label{eq:eso}
    [H_\omega \psi] (n) = \psi(n+1)+\psi(n-1)+ V_\omega(n)\psi(n) , \quad \psi \in \ell^2(\bbZ).
\end{equation*} 

Associated with the family $\{H_\omega\}_{\omega \in \Omega}$ is the 
density of states measure $\kappa$, which can be characterized via eigenvalue distributions of finite restrictions of $\{H_\omega\}$:
for any bounded measurable function $g:\bbR \to \bbR$,
\begin{equation*}
\int g \, d\kappa = \lim_{N \to \infty} \frac{1}{N} \mathrm{Tr}\bigl(g(H_\omega) \chi_{[1,N]}\bigr)
\end{equation*}
for $\bbP$-almost every $\omega \in \Omega$. The integrated density of states (IDS) is then
\[ k(E)=\int \chi_{(-\infty,E]}\, d\kappa,\] which measures the proportion of eigenvalues below energy $E$.
For further background on the IDS, see \cite{ESO1}; for a comprehensive treatment of random Schr\"odinger operators, see \cite{ESO2}.
 
A key feature of the Anderson model is the explicit formula for the almost-sure spectrum \cite{KunSou1980},
\begin{equation}\label{eq:andersonSpectrum}
    \Sigma=\sigma(H_\omega)= \supp \mu + [-2,2] \quad \text{ for } \bbP \text{-a.e. } \omega. 
\end{equation}
The spectral edges are $E_-=a-2$ and $E_+=b+2$.
We impose the following assumptions on $\mu$:
\begin{enumerate}[label={\bf(A\arabic*)}]
    \item \label{A1} The measure $\mu$ is not supported at a single point, that is, $a \neq b$;
    \item \label{A2} There exist constants $C_1,l_1 >0$ such that for small $\eps>0$, 
    \begin{equation*}
        \mu([a,a+\eps]) \geq C_1\eps^{l_1};
    \end{equation*}
    \item \label{A3} There exist constants $C_2,l_2 >0$ such that for small $\eps>0$,  
    \begin{equation*}
        \mu([b-\eps,b]) \geq C_2\eps^{l_2}.
    \end{equation*}
\end{enumerate}
These assumptions are satisfied by a large class of distributions. 

For example, if $\mu$ is the uniform distribution on $[a,b]$, then for every $0<\eps<b-a$ one has 
\[
\mu([a,a+\eps]) = \mu([b-\eps,b])=\frac{\eps}{b-a}
\]
so \ref{A2} and \ref{A3} hold with $C_1 =C_2=(b-a)^{-1}$ and $l_1=l_2=1.$

Let $E=E_- + \eps.$ For $v\in \supp(\mu),$ write $v=u+a$, where $u\in [0,b-a]$. Let $T_a$ be the translation sending $v$ to $v-a$ and define
\[
\nu := (T_a)_*\mu.
\]
In the remainder of this section, $E$ denotes the spectral energy in the Anderson model, while $\eps=E-E_-$ plays the role of the deterministic parameter in Theorem~\ref{thm:main}.

For $(\eps,u),$ define the one-step transfer matrix 
\begin{equation}
    A_{\eps,u}=
    \begin{bmatrix}
        (E_- +\eps) - (a+u) & -1 \\
        1 & 0
    \end{bmatrix}
    =
    \begin{bmatrix}
        -2 + (\eps-u) & -1 \\
        1 & 0
    \end{bmatrix}
    \in SL(2,\bbR).
\end{equation}
Fix an orientation-preserving identification $\varphi:\mathbb{RP}^1 \to \s^1=\bbR/\bbZ$ that sends the parabolic fixed point of the projective action of $A_{0,0}$ to $0\in\bbR/\bbZ$.
Let $g_{\eps,u}: \s^1 \rightarrow \s^1$ be the circle diffeomorphism induced by the projective action of $A_{\eps,u}$ through $\varphi$, and let $G_{\eps,u}:\bbR \rightarrow \bbR$ be the lift chosen continuously in $(\eps,u)$ and normalized by $G_{0,0}(0) = 0$. 
We will need the following lemma.
\begin{lemma}\label{lemma:anderson-conditions}
        There exists $r\in(0,1/2)$ such that the family of lifts 
        \[ G_{\eps,u}:\bbR\to\bbR,\qquad |\eps|\le r,\quad 0\le u\le b-a,
        \]
satisfies the hypotheses \ref{analyt}--\ref{der:y} and
\ref{compSupp}--\ref{fixedPoint} of Theorem~\ref{thm:main}, in the quadratic parabolic case, with $\mu$ replaced by its pushforward $\nu$.
\end{lemma}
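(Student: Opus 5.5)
We verify the hypotheses one at a time, working in an explicit affine chart near the parabolic point and using global projective properties elsewhere. A direction $(x_1,x_2)$ corresponds to the slope $t=x_1/x_2$, on which $A_{\eps,u}$ acts by the Möbius map
\[
t\mapsto (-2+\eps-u)-\frac{1}{t}.
\]
For $\eps=u=0$ this is $t\mapsto -2-1/t$. It has the unique fixed point $t=-1$, where the derivative $1/t^2$ equals $1$. Writing $t=-1+s$ gives
\[
s\mapsto s+s^2+O(s^3),
\]
a quadratic parabolic point. Its sign determines the orientation; if necessary we compose $\varphi$ with a reflection, or equivalently note that the orientation-preserving $\varphi$ can be chosen so that the graph lies above the diagonal. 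Since $\varphi$ is a real-analytic diffeomorphism with $\varphi(-1)=0$ and positive derivative there, conjugating gives $G_{0,0}(x)=x+cx^2+O(x^3)$ with $c>0$. This yields \ref{derder:x} with $k=1$ and any $c_1<c<c_2$, and uniqueness of the fixed point on the circle. Regularity \ref{analyt} follows because the projective action depends analytically on $(\eps,u,x)$ and the lift is chosen continuously.

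For the monotonicity conditions \ref{der:E} and \ref{der:y}, note that $\eps$ and $u$ enter only through the translation $t\mapsto t+(\eps-u)$ applied after the action. On $\mathbb{RP}^1$, adding a positive constant to the slope moves every point strictly in one fixed rotational direction, except at $t=\infty$, the direction $(1,0)$, which is fixed by translations. Hence
\[
\partial_\eps G=-\partial_u G\ge 0\quad\text{everywhere},
\]
with strict inequality except at preimages of $\infty$. This requires the orientation convention above to be consistent: the same orientation that makes the parabola lie above the diagonal must make $\eps$ push forward. I will check this by direct computation at $\eps=u=0$: near $t=-1$, increasing $\eps$ shifts the image $t$ upward, which in the $s$ coordinate is $s\mapsto s+s^2+\eps$, so forward. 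At $x=0$ the image is $t=-1\neq\infty$, so the derivatives there are strictly nonzero with the correct signs.

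For the measure conditions, \ref{compSupp} holds because $\supp\nu=[0,b-a]$ is compact and nontrivial by \ref{A1}. Condition \ref{ineq:dens} follows from \ref{A2}, with $d=1$ and $l=l_1$.

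\textbf{Condition \ref{fixedPoint}: the main obstacle.} We need a single $x_*$ with $G_{0,u}(x_*)>x_*$ for all $u\in[0,b-a]$. By monotonicity in $u$ it suffices to take $u=b-a$, the worst case, since larger $u$ pushes backward. That map is $t\mapsto -2-(b-a)-1/t$, whose trace has absolute value greater than $2$, so it is hyperbolic with two fixed points. I will pick $x_*$ on the arc where this hyperbolic map moves points forward: between its repelling and attracting fixed points, in the forward direction. One candidate is $t=\infty$, which maps to $-2-(b-a)$; I will check that this displacement is forward and nonzero. Then, by monotonicity in $u$, every $G_{0,u}$ with $u\le b-a$ moves $x_*$ at least as far forward, giving \ref{fixedPoint}.

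The delicate point is to confirm that this arc is nonempty in the correct direction and to choose the lift consistently, so that the displacement is forward and not forward by one full turn minus something. I would settle this by explicit computation of the hyperbolic map's action together with the normalization $G_{0,0}(0)=0$. Finally, $r$ is chosen small so that all of these estimates persist for $|\eps|\le r$.
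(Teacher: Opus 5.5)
Your verification of \ref{analyt}--\ref{der:y}, \ref{compSupp} and \ref{ineq:dens} is correct and is essentially the paper's argument. Your slope chart $t=x_1/x_2$ is the paper's chart $t=\xi/\eta+1$ shifted by $1$, so the parabolic point sits at $t=-1$ instead of $t=0$. You also check correctly that the orientation ``increasing $t$'' makes both the quadratic term and the $\eps$-push forward.

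The gap is in \ref{fixedPoint}, which you flag as the main obstacle but do not prove. Your candidate $t=\infty$ is a valid choice. But the one nontrivial point is left as ``I will check''. You need that the \emph{lift} $G_{0,u}$, fixed by $G_{0,0}(0)=0$ and continuity in the parameters, satisfies $G_{0,u}(x_*)>x_*$. It is not enough that the circle map $g_{0,b-a}$ moves $\pi(x_*)$ forward along the arc from its repelling to its attracting fixed point. The dynamics of the hyperbolic map cannot decide this, because they do not see which lift was selected. A priori the selected lift could differ from the ``forward'' one by $-1$. What decides it is continuity of the lift from $(\eps,u)=(0,0)$, and you never supply that argument.

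The missing step is short. Since $F_{\eps,u}(\infty)=-2+\eps-u\neq\infty$ for all parameters, $\pi(x_*)$ is never a fixed point. Hence $(\eps,u)\mapsto G_{\eps,u}(x_*)-x_*$ is continuous on the connected parameter rectangle and never takes an integer value. At $(0,0)$ it lies in $(0,1)$: the function $G_{0,0}(x)-x$ is continuous and $1$-periodic, positive and small near $0$ by your expansion $x+cx^2+O(x^3)$, and never an integer off $\bbZ$ because the parabolic point is the unique fixed point. Therefore $G_{\eps,u}(x_*)-x_*\in(0,1)$ for all $|\eps|\le r$ and $u\in[0,b-a]$, which is \ref{fixedPoint}. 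With this, your reduction to $u=b-a$ and the analysis of the hyperbolic fixed points become unnecessary.

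The paper's route is simpler still. It takes $x_*$ to be the lift of $[0:1]$ ($t=0$ in your chart), whose image $[1:0]$ does not depend on $(\eps,u)$. Continuity of the lift then forces $G_{\eps,u}(x_*)=G_{0,0}(x_*)>x_*$.
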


\begin{proof}
Let
$F_{\eps,u}$ denote the projective action of $A_{\eps,u}$.
We write points of $\mathbb{RP}^1$ as $[\xi:\eta]$ and identify $\mathbb{R} \cup \{ \infty \}$ via $t=\xi/\eta + 1$. Since
\[
A_{\eps,u}
\begin{bmatrix}
\xi\\ \eta
\end{bmatrix}
=
\begin{bmatrix}
(-2+\eps-u)\xi-\eta\\
\xi
\end{bmatrix},
\]
the induced projective action is
\begin{equation}\label{eq:projAction}
F_{\eps,u}(t)=\frac{(-2+\eps-u)(t-1)-1}{t-1}
+ 1=-1+\eps-u-\frac{1}{t-1}.
\end{equation}
For $\eps=u=0$, the fixed-point equation is
\[
t=-1-\frac{1}{t-1},
\]
giving that the parabolic fixed point is $t=0$.

\smallskip 
\noindent \textbf{(G1).}
The dependence of $A_{\eps,u}$ on $(\eps,u)$ is polynomial, so the induced projective action is real-analytic on $\mathbb{RP}^1.$ Therefore the induced family $G_{\eps,u}$ is $C^1$ in $(\eps,u,x)$.

\smallskip
\noindent \textbf{(G2).}
By the choice of $\varphi$, the unique parabolic fixed point of $g_{0,0}$ is at $0$. Reading off the first two nonzero terms in the Maclaurin expansion of $ F_{0,0}$, we get
\[
\frac{\partial F_{0,0}}{\partial t}(0)=1,
\qquad
\frac{\partial^2 F_{0,0}}{\partial t^2}(0)=2>0.
\]
Passing from this local coordinate to the orientation-preserving coordinate given by $\varphi$ preserves the sign of the quadratic coefficient. Hence
\[
\frac{\partial G_{0,0}(0)}{\partial x}=1
\quad \text{and} \quad
\frac{\partial^2 G_{0,0}(0)}{\partial x^2}>0.
\]
This is the quadratic parabolic case, so there are no intermediate derivatives to check.

\smallskip
\noindent \textbf{(G3)--(G4).}
From \eqref{eq:projAction}, the projective action is increasing in
$\eps$ and decreasing in $u$. Passing to the induced circle map and to our
chosen lift preserves these order relations. In
particular,
\[
\frac{\partial G_{\eps,u}(x)}{\partial \eps}\ge 0,
\qquad
\frac{\partial G_{\eps,u}(x)}{\partial u}\le 0.
\]
Moreover, the local formula above shows that these inequalities are strict at
$(\eps,u,x)=(0,0,0)$.

\smallskip
\noindent \textbf{(M1).}
The pushforward measure $\nu,$ the distribution of $u=V_\omega(n)-a,$ has
compact support $[0,b-a]\subset\mathbb{R}_+$. By \ref{A1}, $a\neq b$,
so $\supp(\nu)$ contains at least two points.

\smallskip
\noindent \textbf{(M2).}
By \ref{A2}, for all sufficiently small $\eps>0$,
\[
\nu([0,\eps])=\mu([a,a+\eps])\ge C_1\eps^{l_1}.
\]

\smallskip
\noindent \textbf{(M3).} Let $x_*$ correspond to the lift of $[0:1]\in\mathbb{RP}^1$. Since $A_{0,0}[0:1]=[1:0]$ and since $A_{0,0}$ has a fixed point, by choice of orientation we have $G_{0,0}(x_*)>x_*.$ Because $A_{\eps,u}[0:1]=[1:0]$ independent of $\eps,u$ and $G_{\eps,u}$ is chosen continuously in $\eps,u$, we have $G_{\eps,u}(x_*)=G_{0,0}(x_*)>x_*.$
\end{proof}

We now prove Theorem~\ref{thm:AndersonLifshitz} under assumptions \ref{A1}--\ref{A3}. Having established Lemma~\ref{lemma:anderson-conditions}, we may apply Theorem~\ref{thm:main} to the family $G_{\eps,u}$, with $\eps$ as the deterministic parameter and $u$ as the random parameter distributed according to $\nu$.

\begin{proof}[Proof of Theorem~\ref{thm:AndersonLifshitz}.]

Let $\rho(E)$ denote the projective rotation number of the Schr\"odinger cocycle, normalized as a rotation number for a chosen lift on $\mathbb{RP}^1$. With this convention the IDS satisfies 
\begin{equation}\label{eq:ids-rotation}
k(E)=1-\rho(E).
\end{equation}
Since $k(E_-)=0,$ \eqref{eq:ids-rotation} gives $\rho(E_-)=1.$ Define the edge-normalized rotation number
\begin{equation}\label{eq:edge-normalized}
    \tilde{\rho}(\eps) := \rho(E_-)-\rho(E_-+\eps).
\end{equation}
 Then
 \begin{equation}
 k(E_-+\eps)= 1-\rho(E_-+\eps) = \rho(E_-)-\rho(E_-+\eps) = \tilde{\rho}(\eps).
 \end{equation}

 By the choice of lift, the rotation number associated with the family $G_{\eps,u},$ as defined in Lemma~\ref{lemma:anderson-conditions}, is precisely this edge-normalized quantity $\tilde{\rho}(\eps)$. 
 Hence Theorem~\ref{thm:main}, applied in the quadratic parabolic case, gives 
 \[
 \lim_{\eps \downarrow 0} \frac{\ln(-\ln \tilde{\rho}(\eps))}{\ln(\eps)}=-\frac12.
 \]
Using $k(E_-+\eps)=\tilde{\rho}(\eps),$ we get
\[
\lim_{E\downarrow E_-}
\frac{\ln(-\ln k(E))}{\ln(E-E_-)}
=
-\frac12.
\]

\medskip

The proof at the upper spectral edge is analogous. One writes $E=E_+-\eps=(b+2)-\eps$ and $u=b-V_\omega(0),$ uses \ref{A3}, and applies the same argument to the corresponding edge-normalized rotation number.
Thus
\[
\lim_{E\uparrow E_+}
\frac{\ln\bigl(-\ln(1-k(E))\bigr)}{\ln(E_+-E)}
=
-\frac12.
\]
\end{proof}

\section*{Acknowledgments}
The authors are grateful to Anton Gorodetski for bringing this problem to their attention and to Victor Kleptsyn for helpful comments and discussion.

\bibliographystyle{abbrv}
\bibliography{ref}

@article{Anderson1958,
  author  = {Anderson, P. W.},
  title   = {Absence of diffusion in certain random lattices},
  journal = {Physical Review},
  volume  = {109},
  number  = {5},
  pages   = {1492--1505},
  year    = {1958}
}

@article {AviDamGor2023,
    AUTHOR = {Avila, Artur and Damanik, David and Gorodetski, Anton},
     TITLE = {The spectrum of {S}chr\"odinger operators with randomly
              perturbed ergodic potentials},
   JOURNAL = {Geom. Funct. Anal.},
  FJOURNAL = {Geometric and Functional Analysis},
    VOLUME = {33},
      YEAR = {2023},
    NUMBER = {2},
     PAGES = {364--375},
      ISSN = {1016-443X,1420-8970},
   MRCLASS = {47B37 (35J10 47A10 47B36 82B44)},
  MRNUMBER = {4578461},
MRREVIEWER = {Jaouad\ Sahbani},
       DOI = {10.1007/s00039-023-00632-z},
       URL = {https://doi-org.ezproxy.rice.edu/10.1007/s00039-023-00632-z},
}

@article {BourgainKlein,
    AUTHOR = {Bourgain, Jean and Klein, Abel},
     TITLE = {Bounds on the density of states for {S}chr\"odinger operators},
   JOURNAL = {Invent. Math.},
  FJOURNAL = {Inventiones Mathematicae},
    VOLUME = {194},
      YEAR = {2013},
    NUMBER = {1},
     PAGES = {41--72},
      ISSN = {0020-9910,1432-1297},
   MRCLASS = {35J10 (35B65)},
  MRNUMBER = {3103255},
MRREVIEWER = {Kenichi\ Ito},
       DOI = {10.1007/s00222-012-0440-1},
       URL = {https://doi-org.ezproxy.rice.edu/10.1007/s00222-012-0440-1},
}

@article{CraSim1983,
  author  = {Craig, Walter and Simon, Barry},
  title   = {Log {H}\"older continuity of the integrated density of states for stochastic {J}acobi matrices},
  journal = {Comm. Math. Phys.},
  volume  = {90},
  number  = {2},
  pages   = {207--218},
  year    = {1983}
}

@book {ESO1,
    AUTHOR = {Damanik, David and Fillman, Jake},
     TITLE = {One-dimensional ergodic {S}chr\"odinger operators---{I}.
              {G}eneral theory},
    SERIES = {Graduate Studies in Mathematics},
    VOLUME = {221},
 PUBLISHER = {American Mathematical Society, Providence, RI},
      YEAR = {2022},
     PAGES = {xv+444},
      ISBN = {978-1-4704-5606-1; [9781470470869]; [9781470470852]},
   MRCLASS = {47-01 (28Axx 35Q41 37Axx 47B36 81Q10 82B44)},
  MRNUMBER = {4567742},
MRREVIEWER = {Christian\ Seifert},
}

@book {ESO2,
    AUTHOR = {Damanik, David and Fillman, Jake},
     TITLE = {One-dimensional ergodic {S}chr\"odinger operators---{II}.
              {S}pecific classes},
    SERIES = {Graduate Studies in Mathematics},
    VOLUME = {249},
 PUBLISHER = {American Mathematical Society, Providence, RI},
      YEAR = {2024},
     PAGES = {vii--xvi and 445--1068},
      ISBN = {978-1-4704-6503-2},
   MRCLASS = {47-01 (35J10 47B36 81Q10 82B44)},
  MRNUMBER = {4840232},
MRREVIEWER = {Christian\ Seifert},
}

@article {DamFilGoh2022,
    AUTHOR = {Damanik, David and Fillman, Jake and Gohlke, Philipp},
     TITLE = {Spectral characteristics of {S}chr\"odinger operators
              generated by product systems},
   JOURNAL = {J. Spectr. Theory},
  FJOURNAL = {Journal of Spectral Theory},
    VOLUME = {12},
      YEAR = {2022},
    NUMBER = {4},
     PAGES = {1659--1718},
      ISSN = {1664-039X,1664-0403},
   MRCLASS = {35J10 (37B10 47B36 52C23 58J51 81Q10)},
  MRNUMBER = {4590016},
       DOI = {10.4171/jst/445},
       URL = {https://doi-org.ezproxy.rice.edu/10.4171/jst/445},
}

@article {DamGor2022,
    AUTHOR = {Damanik, David and Gorodetski, Anton},
     TITLE = {Must the spectrum of a random {S}chr\"odinger operator contain
              an interval?},
   JOURNAL = {Comm. Math. Phys.},
  FJOURNAL = {Communications in Mathematical Physics},
    VOLUME = {393},
      YEAR = {2022},
    NUMBER = {3},
     PAGES = {1583--1613},
      ISSN = {0010-3616,1432-0916},
   MRCLASS = {35J10},
  MRNUMBER = {4453241},
MRREVIEWER = {Felipe\ Ponce-Vanegas},
       DOI = {10.1007/s00220-022-04395-w},
       URL = {https://doi-org.ezproxy.rice.edu/10.1007/s00220-022-04395-w},
}

@article{DelyonSouillard1983,
  author  = {Delyon, F. and Souillard, B.},
  title   = {The rotation number for finite difference operators and its properties},
  journal = {Comm. Math. Phys.},
  volume  = {89},
  number  = {3},
  pages   = {415--426},
  year    = {1983}
}

@article {DonVar1975,
    AUTHOR = {Donsker, M. D. and Varadhan, S. R. S.},
     TITLE = {Asymptotics for the {W}iener sausage},
   JOURNAL = {Comm. Pure Appl. Math.},
  FJOURNAL = {Communications on Pure and Applied Mathematics},
    VOLUME = {28},
      YEAR = {1975},
    NUMBER = {4},
     PAGES = {525--565},
      ISSN = {0010-3640,1097-0312},
   MRCLASS = {60J65},
  MRNUMBER = {397901},
MRREVIEWER = {Priscilla\ Greenwood},
       DOI = {10.1002/cpa.3160280406},
       URL = {https://doi.org/10.1002/cpa.3160280406},
}

@misc{DuaGorKle2025,
      title={The fibered rotation number}, 
      author={Pedro Duarte and Anton Gorodetski and Victor Kleptsyn},
      year={2025},
      eprint={arXiv:2512.00195},
      note={arXiv:2512.00195},
      archivePrefix={arXiv},
      primaryClass={math.DS},
      url={https://arxiv.org/abs/2512.00195}, 
}

@misc{Gourmelon,
      title={Rotation numbers of perturbations of smooth dynamics}, 
      author={Nicolas Gourmelon},
      year={2022},
      eprint={arXiv:2002.06783},
      note={arXiv:2002.06783},
      archivePrefix={arXiv},
      primaryClass={math.DS},
      url={https://arxiv.org/abs/2002.06783}, 
}

@article {GorKle2021,
    AUTHOR = {Gorodetski, Anton and Kleptsyn, Victor},
     TITLE = {Parametric {F}urstenberg theorem on random products of
              {$SL(2,\Bbb R)$} matrices},
   JOURNAL = {Adv. Math.},
  FJOURNAL = {Advances in Mathematics},
    VOLUME = {378},
      YEAR = {2021},
     PAGES = {Paper No. 107522, 81},
      ISSN = {0001-8708,1090-2082},
   MRCLASS = {37D25 (37H15 47B80 60B20 82C44)},
  MRNUMBER = {4184298},
MRREVIEWER = {Zhiming\ Li},
       DOI = {10.1016/j.aim.2020.107522},
       URL = {https://doi.org/10.1016/j.aim.2020.107522},
}

@article {GorKle2024,
    AUTHOR = {Gorodetski, Anton and Kleptsyn, Victor},
     TITLE = {{$\log$}-{H}\"older continuity of the rotation number},
   JOURNAL = {Ergodic Theory Dynam. Systems},
  FJOURNAL = {Ergodic Theory and Dynamical Systems},
    VOLUME = {45},
      YEAR = {2025},
    NUMBER = {12},
     PAGES = {3749--3759},
      ISSN = {0143-3857,1469-4417},
   MRCLASS = {37E45 (37E10 81Q10)},
  MRNUMBER = {4986557},
       DOI = {10.1017/etds.2025.10195},
       URL = {https://doi-org.ezproxy.rice.edu/10.1017/etds.2025.10195},
}

@article {Herman1983,
    AUTHOR = {Herman, Michael-R.},
     TITLE = {Une m\'ethode pour minorer les exposants de {L}yapounov et
              quelques exemples montrant le caract\`ere local d'un
              th\'eor\`eme d'{A}rnol' d{} et de {M}oser sur le tore de
              dimension {$2$}},
   JOURNAL = {Comment. Math. Helv.},
  FJOURNAL = {Commentarii Mathematici Helvetici},
    VOLUME = {58},
      YEAR = {1983},
    NUMBER = {3},
     PAGES = {453--502},
      ISSN = {0010-2571,1420-8946},
   MRCLASS = {58F11 (58F15)},
  MRNUMBER = {727713},
MRREVIEWER = {Dietrich\ Flockerzi},
       DOI = {10.1007/BF02564647},
       URL = {https://doi.org/10.1007/BF02564647},
}

@incollection {Kirsch2008,
    AUTHOR = {Kirsch, Werner},
     TITLE = {An invitation to random {S}chr\"odinger operators},
 BOOKTITLE = {Random {S}chr\"odinger operators},
    SERIES = {Panor. Synth\`eses},
    VOLUME = {25},
     PAGES = {1--119},
      NOTE = {With an appendix by Fr\'ed\'eric Klopp},
 PUBLISHER = {Soc. Math. France, Paris},
      YEAR = {2008},
      ISBN = {978-2-85629-254-9},
   MRCLASS = {82B44 (47B80 60H25 81Q10 82-02)},
  MRNUMBER = {2509110},
MRREVIEWER = {Wolfgang\ K\"onig},
}

@article {KirMar1983,
    AUTHOR = {Kirsch, Werner and Martinelli, Fabio},
     TITLE = {Large deviations and {L}ifshitz singularity of the integrated
              density of states of random {H}amiltonians},
   JOURNAL = {Comm. Math. Phys.},
  FJOURNAL = {Communications in Mathematical Physics},
    VOLUME = {89},
      YEAR = {1983},
    NUMBER = {1},
     PAGES = {27--40},
      ISSN = {0010-3616,1432-0916},
   MRCLASS = {82A57 (60H25 82A31)},
  MRNUMBER = {707770},
       URL = {http://projecteuclid.org/euclid.cmp/1103922589},
}

@incollection {KirMet2007,
    AUTHOR = {Kirsch, Werner and Metzger, Bernd},
     TITLE = {The integrated density of states for random {S}chr\"odinger
              operators},
 BOOKTITLE = {Spectral theory and mathematical physics: a {F}estschrift in
              honor of {B}arry {S}imon's 60th birthday},
    SERIES = {Proc. Sympos. Pure Math.},
    VOLUME = {76, Part 2},
     PAGES = {649--696},
 PUBLISHER = {Amer. Math. Soc., Providence, RI},
      YEAR = {2007},
      ISBN = {978-0-8218-4249-2},
   MRCLASS = {82B44 (35R60 47B80 47N50 81Q10)},
  MRNUMBER = {2307751},
MRREVIEWER = {David\ Damanik},
       DOI = {10.1090/pspum/076.2/2307751},
       URL = {https://doi.org/10.1090/pspum/076.2/2307751},
}

@article {KunSou1980,
    AUTHOR = {Kunz, Herv\'e{} and Souillard, Bernard},
     TITLE = {Sur le spectre des op\'erateurs aux diff\'erences finies
              al\'eatoires},
   JOURNAL = {Comm. Math. Phys.},
  FJOURNAL = {Communications in Mathematical Physics},
    VOLUME = {78},
      YEAR = {1980/81},
    NUMBER = {2},
     PAGES = {201--246},
      ISSN = {0010-3616,1432-0916},
   MRCLASS = {39A12 (60H25 82A05)},
  MRNUMBER = {597748},
       URL = {http://projecteuclid.org/euclid.cmp/1103908590},
}

@incollection{LeP1984,
  author    = {Le Page, {\'E}mile},
  title     = {R\'epartition d'\'etat d'un op\'erateur de {S}chr\"odinger al\'eatoire. {D}istribution empirique des valeurs propres d'une matrice de {J}acobi},
  booktitle = {Probability Measures on Groups, {VII} ({O}berwolfach, 1983)},
  series    = {Lecture Notes in Math.},
  volume    = {1064},
  pages     = {309--367},
  publisher = {Springer},
  address   = {Berlin},
  year      = {1984}
}

@article{Lifshitz1965,
doi = {10.1070/PU1965v007n04ABEH003634},
url = {https://dx.doi.org/10.1070/PU1965v007n04ABEH003634},
year = {1965},
month = {apr},
publisher = {},
volume = {7},
number = {4},
pages = {549},
author = {I M Lifshitz},
title = {Energy spectrum structure and quantum states of disordered condensed systems},
journal = {Soviet Physics Uspekhi},
}

@article {LiLu2008,
    AUTHOR = {Li, Weigu and Lu, Kening},
     TITLE = {Rotation numbers for random dynamical systems on the circle},
   JOURNAL = {Trans. Amer. Math. Soc.},
  FJOURNAL = {Transactions of the American Mathematical Society},
    VOLUME = {360},
      YEAR = {2008},
    NUMBER = {10},
     PAGES = {5509--5528},
      ISSN = {0002-9947,1088-6850},
   MRCLASS = {37E45 (37A30 37E10 37H05)},
  MRNUMBER = {2415083},
MRREVIEWER = {Paulo\ R. C. Ruffino},
       DOI = {10.1090/S0002-9947-08-04619-9},
       URL = {https://doi-org.ezproxy.rice.edu/10.1090/S0002-9947-08-04619-9},
}

@article {Ruelle1985,
    AUTHOR = {Ruelle, David},
     TITLE = {Rotation numbers for diffeomorphisms and flows},
   JOURNAL = {Ann. Inst. H. Poincar\'e{} Phys. Th\'eor.},
  FJOURNAL = {Annales de l'Institut Henri Poincar\'e. Physique Th\'eorique},
    VOLUME = {42},
      YEAR = {1985},
    NUMBER = {1},
     PAGES = {109--115},
      ISSN = {0246-0211},
   MRCLASS = {58F11 (34C35)},
  MRNUMBER = {794367},
MRREVIEWER = {Helmut\ R\"ussmann},
       URL = {http://www.numdam.org/item?id=AIHPB_1985__42_1_109_0},
}

@article {Simon1985,
    AUTHOR = {Simon, Barry},
     TITLE = {Lifschitz tails for the {A}nderson model},
   JOURNAL = {J. Statist. Phys.},
  FJOURNAL = {Journal of Statistical Physics},
    VOLUME = {38},
      YEAR = {1985},
    NUMBER = {1-2},
     PAGES = {65--76},
      ISSN = {0022-4715,1572-9613},
   MRCLASS = {82A57 (60H25)},
  MRNUMBER = {784931},
MRREVIEWER = {M.\ Fukushima},
       DOI = {10.1007/BF01017848},
       URL = {https://doi.org/10.1007/BF01017848},
}

@article {Simon1987,
    AUTHOR = {Simon, Barry},
     TITLE = {Internal {L}ifschitz tails},
   JOURNAL = {J. Statist. Phys.},
  FJOURNAL = {Journal of Statistical Physics},
    VOLUME = {46},
      YEAR = {1987},
    NUMBER = {5-6},
     PAGES = {911--918},
      ISSN = {0022-4715,1572-9613},
   MRCLASS = {82A99 (81C10)},
  MRNUMBER = {893123},
MRREVIEWER = {Eugen\ Belokolos},
       DOI = {10.1007/BF01011147},
       URL = {https://doi.org/10.1007/BF01011147},
}

@article {int01,
    AUTHOR = {G. Mezincescu},
     TITLE = {Internal {L}ifschitz singularities of disordered finite-difference {S}chr\"odinger operators},
   JOURNAL = {Commun. Math. Phys.},
    VOLUME = {103},
      YEAR = {1986},
     PAGES = {167--176},
}

@article {int03,
    AUTHOR = {F. Klopp},
     TITLE = {Internal {L}ifshitz tails for {S}chr\"odinger operators with random potentials},
   JOURNAL = {J. Math. Phys.},
    VOLUME = {43},
    NUMBER = {6},
      YEAR = {2002},
     PAGES = {2948--2958},
}

@article {int04,
    AUTHOR = {Klopp, F. and Wolff, T.},
     TITLE = {Lifshitz tails for 2-dimensional random {S}chr\"odinger operators},
   JOURNAL = {J. Anal. Math.},
    VOLUME = {88},
      YEAR = {2002},
     PAGES = {63--147},
}

@article {LiWu,
    AUTHOR = {Li, Xianzhe and Wu, Li},
     TITLE = {The fibered rotation number for ergodic symplectic cocycles
              and its applications: {I}. {G}ap labelling theorem},
   JOURNAL = {Math. Z.},
  FJOURNAL = {Mathematische Zeitschrift},
    VOLUME = {311},
      YEAR = {2025},
    NUMBER = {3},
     PAGES = {Paper No. 53, 29},
      ISSN = {0025-5874,1432-1823},
   MRCLASS = {37H15 (37E45 37K60 47B36)},
  MRNUMBER = {4958517},
MRREVIEWER = {Anton\ Gorodetski},
       DOI = {10.1007/s00209-025-03860-1},
       URL = {https://doi.org/10.1007/s00209-025-03860-1},
}

@book {Johnson,
    AUTHOR = {Johnson, Russell and Obaya, Rafael and Novo, Sylvia and
              N\'u\~nez, Carmen and Fabbri, Roberta},
     TITLE = {Nonautonomous linear {H}amiltonian systems: oscillation,
              spectral theory and control},
    SERIES = {Developments in Mathematics},
    VOLUME = {36},
 PUBLISHER = {Springer, [Cham]},
      YEAR = {2016},
     PAGES = {xxi+497},
      ISBN = {978-3-319-29023-2; 978-3-319-29025-6},
   MRCLASS = {34-02 (34A30 34B20 37C40 37J99 49N10 70Hxx 93C05)},
  MRNUMBER = {3469433},
MRREVIEWER = {Diogo\ Pinheiro},
       DOI = {10.1007/978-3-319-29025-6},
       URL = {https://doi.org/10.1007/978-3-319-29025-6},
}
\end{document}